\documentclass[12pt,reqno]{amsart}
\usepackage[utf8x]{inputenc}

\usepackage{latexsym,amssymb,amsmath,amsthm,bbm}
\usepackage{epsfig}
\usepackage{pst-eucl, esint}
\usepackage{pst-plot,pstricks-add}

\bibliographystyle{amsalpha}

\setlength{\textwidth}{15cm}
\setlength{\hoffset}{-1.5cm}


\newcommand{\R}{{\ensuremath{\mathbb{R}}}}
\newcommand{\N}{{\ensuremath{\mathbb{N}}}}

\renewcommand{\P}{\ensuremath{\mathbb{P}}}
\renewcommand{\dj}{d\kern-0.4em\char"16\kern-0.1em}
\newcommand{\E}{\ensuremath{\mathbb{E}}}

\newcommand{\ds}[1]{{\displaystyle{#1}}}


\newtheorem{Thm}{Theorem}[section]

\newtheorem{Lem}[Thm]{Lemma}
\newtheorem{Prop}[Thm]{Proposition}
\newtheorem{Rem}[Thm]{Remark}


\parskip3ex
\parindent0ex
\numberwithin{equation}{section}
\bibliographystyle{amsalpha}

\title[Nondegenerate jump processes]{Analysis of jump processes with nondegenerate jumping kernels}
\author{Moritz Kassmann}
\address{Fakult\"{a}t f\"{u}r Mathematik, Universit\"{a}t Bielefeld, Postfach 100131, D-33501 Bielefeld, Germany}
\curraddr{}
\email{moritz.kassmann@math.uni-bielefeld.de}
\thanks{}

\author{Ante Mimica}
\address{Fakult\"{a}t f\"{u}r Mathematik, Universit\"{a}t Bielefeld, Postfach 100131, D-33501 Bielefeld, Germany}
\curraddr{}
\email{amimica@math.uni-bielefeld.de}
\thanks{}

\subjclass[2000]{Primary 60J75, Secondary 31B05, 31B10, 35B45, 47G20, 60J45}

\keywords{Jump process, harmonic function, regularity estimate, Harnack inequality}

\date{\today}

\begin{document}


\begin{abstract}
 We prove regularity estimates for functions which are harmonic with respect to
certain jump processes. The aim of this article is to extend the method of
Bass-Levin\cite{BL1} and Bogdan-Sztonyk\cite{BoSt05} to more general processes. Furthermore, we establish a
new version of the Harnack inequality that implies regularity estimates
for corresponding harmonic functions. 
\end{abstract}

\maketitle

\section{Introduction}
Let $\alpha \in (0,2)$. We define a non-local operator $\mathcal{L}$  by
\begin{align}\label{eq:defL}
	\mathcal{L}f(x)=\int_{\R^d\setminus\{0\}}(f(x+h)-f(x)-\langle
\nabla f(x), h \rangle \,\mathbbm{1}_{\{|h|\leq 1\}})n(x,h)\,dh,
\end{align}
for $f\in C_b^2(\R^d)$. 
Here $n\colon \R^d\times \left(\R^d\setminus \{0\}\right)\rightarrow [0,\infty)$
is a measurable function with 
\begin{align} \label{eq:n_assum_bl}
c_1 |h|^{-d-\alpha} \leq n(x,h) \leq c_2 |h|^{-d-\alpha} 
\end{align}
for every $h \in \R^d\setminus \{0\}$, any $x \in \R^d$ and fixed positive
reals $c_1 < c_2$. Note that $n(x,h) = |h|^{-d-\alpha}$ for every $h$ implies
$\mathcal{L}f = - c(\alpha) (-\Delta)^{\alpha/2} f$ with some appropriate
constant $c(\alpha)$. 

In \cite{BL1} it is shown that harmonic functions with respect to
$\mathcal{L}$ satisfy a Harnack inequality in the following sense: There is a
constant $c_3\geq 1$ such that for every ball $B_R$ the following
implication holds:
\begin{align*}
 f \geq 0 \text{ in } \R^d, \; f \text{ harmonic in } B_R \quad
\Rightarrow \quad \forall\, x,y \in B_{R/2}: f(x) \leq c_3 f(y) \,.  
\end{align*}
In \cite{BL1} it is also shown that harmonic functions with respect to
$\mathcal{L}$ satisfy the following a-priori estimate:  There are constants
$\beta \in (0,1)$, $c_4\geq 1$ such that for every ball $B_R$ the following
implication holds:
\begin{align*}
f \text{ harmonic in } B_R \quad \Rightarrow \quad
\|f\|_{C^\beta(\overline{B_{R/2}})} \leq c_4 \|f\|_\infty \,.
\end{align*}

This result and its proof recently generated several research activities, see the short
discussion below. Our aim is to prove similar results under weaker
assumptions on the kernel $n$. 

Let us be more precise. We consider kernels $n\colon \R^d\times \left(\R^d\setminus
\{0\}\right)\rightarrow [0,\infty)$ that satisfy for every $x, h\in\R^d$, $h\not=0 $
\begin{equation}\label{eq:symm_ass}
	n(x,h)=n(x,-h)
\end{equation}
and
\begin{align}\label{eq:main_assum}
k_1\big(\tfrac{h}{|h|}\big) j(|h|) \leq n(x,h)\leq k_2\big(\tfrac{h}{|h|}\big) j(|h|)\,
\end{align}
where 
$k_1,k_2\colon S^{d-1}\rightarrow [0,\infty)$ are measurable bounded symmetric functions on the unit sphere satisfying the following conditions: There are $\delta>0, N \in \mathbb{N}, \varepsilon_1, \ldots, \varepsilon_N > 0$ and $\eta_1, \ldots, \eta_N \in S^{d-1}$ such that for $S_i = S^{d-1} \cap \big(B(\eta_i, \varepsilon_i) \cup B(-\eta_i, \varepsilon_i)\big)$
\begin{align}\label{eq:k}
k_2(\xi) \geq k_1(\xi) \geq \delta \; \text{ if } \xi \in \bigcup\limits_{i=1}^N S_i \quad \text{ and } \quad k_2(\xi) = k_1(\xi) = 0 \text{ otherwise}. 
\end{align}

Let $j:(0,\infty) \to [0,\infty)$ be a function 
such that $\int_{\R^d}
(|z|^2 \wedge 1) \, j(|z|)\,dz $ is finite. We assume further: 
\begin{itemize}
	\item[(J1)] There exists $\alpha \in (0,2)$ and a function $\ell\colon (0,2)\rightarrow (0,\infty)$ which is slowly varying
	at $0$ (i.e. $\ds{\lim_{r\to 0+}\frac{\ell(\lambda r)}{\ell(r)}=1}$ for any $\lambda>0$) and bounded away from $0$ and $\infty$ on every compact interval such that  
		\[ j(t) = \frac{\ell(t)}{t^{d+\alpha}} \; \text{ for every } 0<t\leq 1 \,. \]
	\item[(J2)] There is a constant $\kappa \geq 1$ such that
		\begin{align*}
			&j(t) \leq \kappa j(s) \; \text{ whenever }\ 1\leq s\leq
t \,.
		\end{align*}
\end{itemize}
In order to establish regularity estimates we need an additional weak 
assumption. 
\begin{itemize} 
	\item[(J3)] There is $\sigma > 0$ such that 
    \[ \limsup_{R \to \infty} R^\sigma \int\limits_{|z|>R} j(|z|) \, dz \leq 1
\,.
    \]
If this condition holds, then one can always choose $\sigma \in
(0,\alpha)$.
\end{itemize}

\begin{Rem}
	The symmetry assumption (\ref{eq:symm_ass}) is used only in Proposition \ref{prop:estp} and can be dispensed with if $\alpha\in (0,1)$.
\end{Rem}
\vspace*{-3ex}
{\bf Example 1:} If a kernel $n$ satisfies condition (\ref{eq:n_assum_bl}),
then it also satisfies (J1)-(J3). Choose $N=1$, $\varepsilon_1=4$, i.e. $S_1=S^{d-1}$, $k_1 \equiv \delta=c_1$, $k_2 \equiv c_2$, 
$j(s)=s^{-d-\alpha}$ in
(\ref{eq:main_assum}), $\ell \equiv 1$ in (J1), $\kappa = 1$ in (J2) and
$\sigma \in (0,\alpha)$ arbitrarily in (J3). In general, (J1)-(J3) hold for jumping kernels corresponding to stable
processes, stable-like processes and truncated versions. Sums of such jumping
kernels can be considered, too. 

{\bf Example 2:} Let $N \in \N$, $\eta_1, \ldots, \eta_N \in S^{d-1}$ and $\varepsilon_1, \ldots, \varepsilon_N$ be positive real numbers such that the sets $S_i = S^{d-1} \cap \big(B(\eta_i,  \varepsilon_i) \cup B(-\eta_i, \varepsilon_i)\big)$ are pairwise disjoint for $i=1, \ldots, N$. Set $B=\bigcup\limits_{i=1}^N S_i$. Let $k_1=\delta \mathbbm{1}_B$ for some $\delta >0$ and $k_2 = c k_1$ for some $c>1$. Let $j(s)=s^{-d-\alpha}$ for $s>0$. Then our assumptions are satisfied if (\ref{eq:main_assum}) and (\ref{eq:symm_ass}) hold true. For the particular choice where $x \mapsto n(x,h)$ is constant (case of L\'{e}vy process), this class of examples is treated in \cite[p.148]{BoSt05}, where it is shown that for $N = \infty$ the Harnack inequality fails.

Given a linear operator $\mathcal{L}$ as in
(\ref{eq:defL}) we assume that there exists a strong Markov process
$X=(X_t,\P^x)$ with paths that are right-continuous with left limits such that
the process
\[
\left\{f(X_t)-f(X_0)-\int_0^t\mathcal{L}f(X_s)\,ds\right\}_{t\geq 0}
\]
is a $\P^x$-martingale for all $x\in\R^d$ and $f\in C_b^2(\R^d)$. We say that a
bounded function $f:\R^d \to \R$ is harmonic with respect to $\mathcal{L}$ in an open set $\Omega$ if $f\big(X_{\min(t,\tau_{\Omega'})}\big)$ is a right-continuous martingale for every open $\Omega' \subset \R^d \text{ with } \overline{\Omega'} \subset \Omega$.

We can prove the following version of the Harnack inequality.

\begin{Thm}\label{thm:harnack}
Assume (J1) and (J2). There exist constants $c_1, c_2 \geq 1$ such that for
every
$x_0\in \R^d$, $r\in
(0,\frac{1}{4})$ and every bounded function $f\colon \R^d\rightarrow \R$ which
is
non-negative in $B(x_0,4r)$  and harmonic in $B(x_0,4r)$ the following
estimate holds
\[
f(x)\leq c_1 f(y)+c_2\left( \frac{r^\alpha}{\ell(r)}\right) \sup_{v\in
B(x_0,2r)}\int_{B(x_0,4r)^c}f^-(z)n(v,z-v)\,dz\ 
\]
for all $x,y\in B(x_0,r)$.
\end{Thm}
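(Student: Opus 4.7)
The plan is to reduce the claimed estimate to two ingredients: a Harnack inequality for an auxiliary function that is non-negative on all of $\R^d$ and harmonic on the smaller ball $B(x_0,2r)$, and a L\'evy-system bound on the exit contribution of $f^-$.

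Set $\tau:=\tau_{B(x_0,2r)}$ and define the harmonic envelope of $f^+$ with respect to $B(x_0,2r)$ by $h(x):=\E^x[f^+(X_\tau)]$ for $x\in B(x_0,2r)$, extended by $h:=f^+$ on $B(x_0,2r)^c$. Then $h$ is bounded, non-negative on $\R^d$, and harmonic in $B(x_0,2r)$. I would then invoke---or equivalently establish in parallel as a dedicated intermediate result---the Harnack inequality for non-negative harmonic functions on $B(x_0,2r)$ applied to $h$: namely $h(x)\le c_1 h(y)$ for all $x,y\in B(x_0,r)$. Since $f$ is harmonic on $B(x_0,4r)\supset\overline{B(x_0,2r)}$, optional stopping at $\tau$ combined with $f=f^+-f^-$ yields $f(x)=\E^x[f(X_\tau)]=h(x)-\E^x[f^-(X_\tau)]$, so
\[
f(x)\;\le\;h(x)\;\le\;c_1\,h(y)\;=\;c_1\,f(y)+c_1\,\E^y[f^-(X_\tau)].
\]

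To bound the correction term I would apply the L\'evy system for $X$:
\[
\E^y[f^-(X_\tau)]\;=\;\E^y\!\int_0^\tau\!\!\int_{B(x_0,2r)^c} f^-(z)\,n(X_s,z-X_s)\,dz\,ds.
\]
Because $f\ge 0$ on $B(x_0,4r)$, the inner integrand vanishes on $B(x_0,4r)\setminus B(x_0,2r)$, so $z$ effectively ranges only over $B(x_0,4r)^c$. Pulling out the supremum over $v=X_s\in B(x_0,2r)$ and combining with the exit-time estimate $\E^y[\tau]\le C\, r^\alpha/\ell(r)$---which is a standard consequence of the lower bound in (\ref{eq:main_assum}) together with (J1), obtained for instance from Dynkin's formula applied to a suitable smooth test function---yields
\[
\E^y[f^-(X_\tau)]\;\le\;C\,\frac{r^\alpha}{\ell(r)}\sup_{v\in B(x_0,2r)}\int_{B(x_0,4r)^c} f^-(z)\,n(v,z-v)\,dz,
\]
and taking $c_2:=c_1 C$ produces the stated inequality.

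The principal obstacle is the Harnack inequality invoked for $h$. The degeneracy in (\ref{eq:k})---the kernel $n$ is supported only in the union $\bigcup_i S_i$ of spherical caps---means that hitting-probability lower bounds for arbitrary sets of positive Lebesgue measure cannot be extracted from the lower bound in (\ref{eq:main_assum}) by a single-jump argument: in one jump the process $X$ only moves along directions inside those caps. Following the strategy of \cite{BL1} and \cite{BoSt05}, the remedy is to compose a bounded number of jumps and show that the resulting transition kernel dominates a density that is non-degenerate on a full neighbourhood. This quantitative regularisation after finitely many jumps then feeds into a Krylov-Safonov iteration yielding the desired Harnack inequality for non-negative harmonic functions. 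Carrying this through under (J1)--(J3), in the presence of the slowly varying factor $\ell$ and the cone-support condition on $k_1,k_2$, is the analytic heart of the paper; granted that, Theorem~\ref{thm:harnack} follows as above.
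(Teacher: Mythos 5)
Your reduction is logically sound and rather clean: optional stopping gives $f(x)=h(x)-\E^x[f^-(X_\tau)]$ with $h(x)=\E^x[f^+(X_\tau)]$, so if one has the Harnack inequality for non-negative harmonic functions then $f(x)\le h(x)\le c_1 h(y)=c_1 f(y)+c_1\E^y[f^-(X_\tau)]$, and the L\'evy-system identity (Proposition~\ref{prop:comp}) together with the exit-time bound of Proposition~\ref{prop:est1} controls the correction term exactly as you say. The organizational direction, however, is the reverse of the paper's: here Theorem~\ref{thm:harnack} is the primitive result and the classical Harnack inequality is derived from it as a corollary (Remark~1.2). There is therefore no pre-existing ``Harnack inequality for non-negative harmonic functions'' that you may invoke; you would have to prove it, and, as you acknowledge, you do not.

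That is the genuine gap, and it is not cosmetic. The Krylov--Safonov iteration that constitutes the bulk of the actual proof already has to track the $f^-$ correction term at every stage --- see the definition of $K_0$ in \eqref{eq:k0}, the contradiction set $A'=\{w\in B(x,\lambda s):f(w)\ge\zeta K\}$, and the way the restricted Harnack inequality (Proposition~\ref{prop:rhi}) is applied to $H=f^+\mathbbm{1}_{B_{3s/2}^c}$ while the negative exit mass re-enters through Proposition~\ref{prop:comp}. Proving the non-negative case by that same machinery is no easier than proving the full statement; the correction term simply drops out of an argument that must be built in any case. So your reduction, while correct, does not shorten the path: it reorganizes Theorem~\ref{thm:harnack} and Remark~1.2, but the analytic core you label ``the heart of the paper'' is left entirely untouched. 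Your sketch of how to supply it (``compose a bounded number of jumps'' and feed the regularized kernel into a Krylov--Safonov iteration) is consistent in spirit with what Propositions~\ref{prop:ks} and~\ref{prop:rhi} actually do --- a two- or three-stage jump through intermediate balls $\tilde B_{\lambda r}$ and $B'_{\lambda r/8}$ exploiting the cone condition \eqref{eq:cone1} --- but none of those estimates are produced, and the geometric construction that makes them work under the degenerate cap-support assumption \eqref{eq:k}, together with the handling of the slowly varying factor $\ell$, is precisely what would have to be checked. One further small point: your auxiliary function $h$ is harmonic only in $B(x_0,2r)$, so the Harnack inequality you would need is one at radius ratio $1/2$ rather than the $1/4$ of the statement; this is easily patched (e.g.\ by chaining, or by defining $h$ via $\tau_{B(x_0,7r/2)}$) but should be stated.
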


\begin{Rem}
 If $f$ is, in addition, non-negative in all of $\R^d$, then the
classical version of the Harnack inequality follows, i.e. for all $x,y\in
B(x_0,r)$:
\[
f(x)\leq c_1 f(y) \,.
\]
\end{Rem}
As a corollary to the Harnack inequality we obtain the following regularity
result.

\begin{Thm}\label{thm:reg_est}
Assume (J1), (J2) and (J3). Then there exist $\beta \in (0,1)$, $c_3,c_4 \geq 1$ such that for every $x_0
\in \R^d$, every $R  \in (0,1)$, every function
$f:\R^d \to \R$ which is harmonic in $B(x_0,R)$ and every $\rho \in (0,R/2)$
\begin{align}
\sup\limits_{x,y \in B(x_0,\rho)} | f(x) - f(y) | &\leq c_3 \|f\|_\infty
(\rho/R)^{\beta} \,, \\
\text{ in particular } \quad \|f\|_{C^\beta(\overline{B(x_0,R/2)})} &\leq
c_4 \|f\|_\infty \,. 
\end{align}
\end{Thm}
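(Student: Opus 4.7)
\emph{Strategy.} The plan is to derive the H\"older estimate from the Harnack inequality (Theorem \ref{thm:harnack}) by iterating a one-step oscillation-decay estimate on a geometric sequence of concentric balls. Fix $x_0 \in \R^d$ and $R \in (0,1)$, choose $\rho \in (0,1/4)$ (to be specified), and set $r_k = R \rho^k / 4$, $B_k = B(x_0, r_k)$, $M_k = \sup_{B_k} f$, $m_k = \inf_{B_k} f$, and $\omega_k = M_k - m_k$. It suffices to produce universal constants $\theta \in (0,1)$ and $C_0 \geq 2$ such that $\omega_k \leq C_0 \theta^k \|f\|_\infty$ for every $k \geq 0$; H\"older continuity with exponent $\beta = \log(1/\theta)/\log(1/\rho)$ then follows by a standard telescoping argument, giving both conclusions of the theorem.

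\emph{One-step oscillation decay.} The functions $g_1 := f - m_k$ and $g_2 := M_k - f$ are both non-negative on $B_k$ and harmonic in $B_k$, and the choice $\rho \leq 1/4$ ensures $B(x_0, 4 r_{k+1}) \subset B_k$. Applying Theorem \ref{thm:harnack} to each $g_i$ at radius $r_{k+1}$, taking supremum and infimum over $B_{k+1}$, and adding yields
\begin{equation*}
\omega_{k+1} \leq \gamma\, \omega_k + c_\ast\, \frac{r_{k+1}^\alpha}{\ell(r_{k+1})} \bigl(T_1 + T_2 \bigr), \qquad \gamma := \frac{c_1-1}{c_1+1} \in [0, 1),
\end{equation*}
where $T_i = \sup_{v \in B(x_0, 2 r_{k+1})} \int_{B(x_0, 4 r_{k+1})^c} g_i^-(z)\, n(v, z-v)\, dz$.

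\emph{Tail estimate.} Since $g_1^-$ vanishes on $B_k$, split $B_k^c = B_0^c \sqcup \bigsqcup_{j=0}^{k-1} (B_j \setminus B_{j+1})$. On the annulus $B_j \setminus B_{j+1}$ we have $f \geq m_j$, so $g_1^-(z) \leq m_k - m_j \leq \omega_j \leq C_0 \theta^j \|f\|_\infty$ by the inductive hypothesis; a polar calculation using (J1) together with Karamata's theorem yields $\sup_v \int_{B_j \setminus B_{j+1}} n(v, z-v)\, dz \leq C\, r_{j+1}^{-\alpha} \ell(r_{j+1})$, while Potter's bounds for the slowly varying $\ell$ give $\ell(r_{j+1})/\ell(r_{k+1}) \leq C_\delta \rho^{-\delta (k-j)}$ for any $\delta > 0$. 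On $B_0^c$ we use the crude bound $g_1^- \leq 2\|f\|_\infty$ and split the integration at $|z-v| = 1$: the inner piece is controlled by (J1), and the outer piece (in the region where $j$ need not have the $\ell(t)/t^{d+\alpha}$ form) is handled quantitatively by (J3), ensuring that the resulting contribution, after multiplication by $r_{k+1}^\alpha / \ell(r_{k+1})$, decays geometrically in $k$. Treating $g_2$ symmetrically, we arrive at
\begin{equation*}
\frac{r_{k+1}^\alpha}{\ell(r_{k+1})}\, (T_1 + T_2) \;\leq\; C\, C_0\, \|f\|_\infty \sum_{j=0}^{k-1} \theta^j\, \rho^{(\alpha - \delta)(k-j)} \;+\; C\, \|f\|_\infty\, \rho^{(k+1)(\alpha - \delta)}.
\end{equation*}

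\emph{Closing the induction and main obstacle.} Pick $\theta \in (\gamma, 1)$, then $\delta \in (0, \alpha)$ small, and finally $\rho \in (0, 1/4)$ small enough that $\rho^{\alpha-\delta} < \theta$ and that the right-hand side above is at most $\tfrac{1}{2}(\theta - \gamma)\, C_0\, \theta^k \|f\|_\infty$. Both the geometric sum and the $B_0^c$ contribution are then bounded by $\theta^k$ times a constant that can be made arbitrarily small by shrinking $\rho$, and the induction closes: $\omega_{k+1} \leq C_0\, \theta^{k+1} \|f\|_\infty$. The principal obstacle is the uniform-in-scale control of the slowly varying factor $\ell$: Potter's theorem forces a small loss $\delta$, which in turn forces the H\"older exponent $\beta$ to be strictly less than $\alpha$, and the parameters $\theta$, $\delta$, $\rho$ have to be chained in the correct order so that the geometric sum over annuli remains summable rather than accumulating across scales.
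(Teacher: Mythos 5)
Your proposal is correct in its essentials, but it takes a genuinely different route from the paper. The paper's proof of Theorem \ref{thm:reg_est} is a short reduction: it reformulates the Harnack inequality so that the multiplier $r^\alpha/\ell(r)$ is replaced by the scale-intrinsic quantity $M(x_0,r)=(\int_{B(x_0,4r)^c}m(x_0,z-x_0)\,dz)^{-1}$ (verified via (\ref{eq:heart}) and Proposition \ref{prop:help}(ii)), checks that $\gamma(t)=j(t)$ satisfies the tail growth/decay conditions (\ref{eq:assum_gamma}) using (J3) and Karamata for the upper bound and Proposition \ref{prop:help}(ii) for the lower bound, and then invokes the abstract Theorem \ref{thm:app_moritz}, whose proof in turn packages the iteration into a growth condition (\ref{eq:assum_nu_r}) on the normalized tail measures $\nu^x_r$ and cites \cite[Lemma 11]{KaDy11}. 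What you do instead is essentially unpack that cited lemma: you run the classical De Giorgi/Moser oscillation-decay iteration directly, applying Theorem \ref{thm:harnack} to $f-m_k$ and $M_k-f$, decomposing the tail integrals over dyadic annuli $B_j\setminus B_{j+1}$, and controlling the ratio $\ell(r_{j+1})/\ell(r_{k+1})$ via Potter's bounds (with the $\delta$-loss that indeed caps $\beta$ below $\alpha$). Your parameter chaining $\theta\to\delta\to\rho$ is the correct order, and the place where (J3) and the boundedness of $\int_{|z|>1}j(|z|)\,dz$ enter (the contribution from $B_0^c$) is identified accurately. The paper's route buys modularity and brevity — the abstract theorem cleanly separates the probabilistic Harnack input from the analytic iteration and is reusable; your route buys self-containment at the cost of the bookkeeping you yourself flag as the main obstacle. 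One minor point worth spelling out in a full write-up: the second conclusion (the $C^\beta$ bound on $\overline{B(x_0,R/2)}$) follows by applying the first oscillation estimate re-centred at each $y\in B(x_0,R/2)$ with outer radius $R/2$, since $B(y,R/2)\subset B(x_0,R)$; this is the "standard telescoping" you gesture at but should be stated.
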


Let us comment on the differences between our results and those of \cite{BL1}:

(1) We can treat kernels $n(x,h)$ for which the quantity
\[ \inf\limits_{x \in \R^d} \liminf\limits_{r\to 0+}  \frac{|\{h \in B(0,r);
n(x,h) = 0\}|}{|B(0,r)|} \]
is arbitrarily close to $1$, e.g. $n(x,h)$ as in (\ref{eq:n_cone}). \\ 
(2) For fixed $x \in \R^d$, upper and lower bounds for $n(x,h)$ may not allow
for scaling.\\
(3) Large jumps of the process might not be comparable, i.e. the quantity 
\begin{align*} 
\sup \left\{ \frac{n(x,h_1)}{n(y,h_2)} ; |x-y| \leq 1, |h_1 - h_2| \leq 1,
|h_2|+|h_1| \geq 2 \right\} 
\end{align*}
might be infinite. \\
(4) We establish a new version of the Harnack inequality and derive
a-priori
Hölder regularity estimate as a consequence. In a different setting, this
procedure was recently established in \cite{Kas11b}. 

The constants in the main results of our work and \cite{BL1} depend on $\alpha$. It would be desirable to adopt the technique further such that results would be robust for $\alpha \to 2$. Under an assumption like
(\ref{eq:n_assum_bl}), this has been acheived with analytic techniques in
\cite{Sil06} and \cite{Kas11b}.

Comparing our results to the local
theory of second order partial differential equations, a natural question
arises: Which is a natural class of kernels $n$ such that similar results hold
true? 

We call a kernel $n$ of the above type nondegenerate if there is a
function $N:(0,1) \to (0,\infty)$ with $\lim\limits_{\rho \to 0+} N(\rho) =
+\infty$ and $\lambda, \Lambda >0$ such that for every $\rho \in (0,1)$ and $x
\in \R^d$ the symmetric matrix $[A^\rho_{ij}(x)]_{i,j=1}^d$ defined by 
\[ A^\rho_{ij}(x) = N(\rho) \int\limits_{\{0 < |h| \leq \rho\}} h_i h_j  n(x,h)
\, dh \,. \] 
satisfies for every $\xi \in \R^d$ 
\begin{align}\label{eq:nondeg_assum_elliptic}
\lambda |\xi|^2 \leq \sum\limits_{i,j=1}^d A^\rho_{i,j}(x) \xi_i \xi_j \leq \Lambda |\xi|^2 \,.
\end{align}

If $n$ depends only on $h$ and $N(\rho)=\rho^{\alpha-2}$, then this condition
implies that the corresponding L\'{e}vy process has a smooth density, see
\cite{Pic}. Note that condition (\ref{eq:n_assum_bl}) implies the nondegeneracy
condition (\ref{eq:nondeg_assum_elliptic}) with $N(\rho)=\rho^{\alpha-2}$ but is
not necessary, just consider the example 
\begin{align}\label{eq:n_cone} 
n(x,h) = |h|^{-d-\alpha} \mathbbm{1}_{\{|h_1|\geq 0.99 |h|\}} \,.
\end{align}
Note that (\ref{eq:nondeg_assum_elliptic}) holds under our assumptions.

Let us comment on other articles that generalize the results of
\cite{BL1}. Note that we do not include works on nonlocal Dirichlet forms. 
\cite{SV1} gives conditions on L\'evy processes and more general Markov jump
processes such that the theory of \cite{BL1} is applicable. In \cite{BKa} the
theory is extended to the variable order case and to situations where the lower
and upper bound in (\ref{eq:n_assum_bl}) behave differently for $|h| \to 0$. In
these cases, regularity of harmonic functions does not hold. Regularity is
established in \cite{BaKa05PDE} for variable order cases under additional
assumptions. Fine potential theoretic results are obtained in \cite{BSS02,
BoSt05} for stable processes. The case of L\'{e}vy processes with truncated
stable L\'{e}vy densities is covered in \cite{KS} and generalized in \cite{Mi}.
As mentioned above there is an independent approach with analytic methods
developed in \cite{Sil06, CaSi09} covering linear and fully nonlinear
integro-differential operators. 

{\bf Notation:} For two functions $f$ and $g$ we write $f(t) \sim g(t)$ if
$f(t)/g(t) \to 1$. For $A \subset \R^d$ open or closed $\tau_A$ denotes the
first exit time of the Markov process under consideration. $T_A$ denotes the
the first hitting time of the set $A$.

{\bf Acknowledgement:} The authors thank an anonymous referee for pointing out that the previous version of assumptions (\ref{eq:main_assum}), (\ref{eq:k}) was overly general. Example 2 was added in order to motivate these assumptions.

\section{Some probabilistic estimates}

In this section we prove useful auxiliary results. We follow closely
the ideas of \cite{BL1}. However, we need to provide several computations because
of the appearance of a slowly varying function in (J1). The proofs of
Proposition \ref{prop:ks} and Proposition \ref{prop:rhi} are significantly
different from their counterparts in \cite{BL1}.

The following proposition will be used often in obtaining probabilistic estimates.
\begin{Prop}\label{prop:comp}
Let $A,B\subset \R^d$ be disjoint Borel sets. Then for every bounded stopping
time $T$  
\[
\E^x\left[\sum_{s\leq T} \mathbbm{1}_{\{X_{s-}\in A, X_s \in B\}}\right]=\E^x\left[\int_0^T \int _B \mathbbm{1}_{A}(X_s)\,n(X_s,u-X_s)\, du\right]
\]
for every $x\in \R^d$.
\end{Prop}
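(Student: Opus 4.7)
The statement is a version of the Lévy system / compensation formula for the pure-jump Markov process $X$ driven by $\mathcal{L}$. My plan is first to identify the predictable compensator of the jump random measure of $X$, and then to specialize that identity to the event of jumping from $A$ into $B$ via a change of variables.

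Let $\mu_X(ds,dh):=\sum_{s:\,\Delta X_s\neq 0}\delta_{(s,\Delta X_s)}$ be the integer-valued random measure counting the jumps of $X$. The key step is to show that its predictable compensator under $\P^x$ is $\nu(ds,dh):=n(X_{s-},h)\,dh\,ds$. To do this I would pick any $f\in C_b^2(\R^d)$ and compare two representations of $f(X_t)-f(X_0)$: by the martingale problem assumed in the excerpt, it equals $\int_0^t\mathcal{L}f(X_s)\,ds$ plus a martingale; on the other hand, since $\mathcal{L}$ has no second-order term, $X$ has no continuous martingale part, so Itô's formula for semimartingales expresses $f(X_t)-f(X_0)$ as an integral of $f(X_{s-}+h)-f(X_{s-})-\langle\nabla f(X_{s-}),h\rangle\mathbbm{1}_{\{|h|\leq 1\}}$ against $\mu_X$, together with the drift that compensates the small-jump martingale. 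Matching this decomposition with (\ref{eq:defL}) forces the compensator of $\mu_X$ to be $\nu$. A standard monotone-class / approximation argument then yields, for every nonnegative predictable integrand $F$ and bounded stopping time $T$,
\[
\E^x\Bigl[\int_0^T\!\!\int_{\R^d} F(s,h)\,\mu_X(ds,dh)\Bigr]
=\E^x\Bigl[\int_0^T\!\!\int_{\R^d} F(s,h)\,n(X_{s-},h)\,dh\,ds\Bigr].
\]

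Applying this identity with $F(s,h):=\mathbbm{1}_A(X_{s-})\,\mathbbm{1}_B(X_{s-}+h)$, the left-hand side counts precisely the jumps of $X$ from $A$ into $B$ up to time $T$ (the disjointness $A\cap B=\emptyset$ automatically forces $\Delta X_s\neq 0$), recovering the left-hand side of the proposition. On the right-hand side, the substitution $u:=X_{s-}+h$ turns it into $\E^x\bigl[\int_0^T\!\int_B\mathbbm{1}_A(X_{s-})\,n(X_{s-},u-X_{s-})\,du\,ds\bigr]$, and since $X$ has only countably many jump times one has $X_{s-}=X_s$ for Lebesgue-a.e.\ $s$, so $X_{s-}$ may be replaced by $X_s$ inside the $ds$-integral, giving the stated formula.

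The main obstacle is the first step: extracting the compensator of $\mu_X$ from the weak martingale-problem input on $C_b^2(\R^d)$ alone. The delicate point is handling the small-jump drift coming from the $-\langle\nabla f,h\rangle\mathbbm{1}_{\{|h|\leq 1\}}$ correction in (\ref{eq:defL}); the symmetry assumption (\ref{eq:symm_ass}) ensures $\int_{|h|\leq 1}\langle\nabla f(x),h\rangle\,n(x,h)\,dh=0$, which simplifies the bookkeeping, while the integrability of $(|h|^2\wedge 1)j(|h|)$ guaranteed by (J1)-(J2) ensures all objects are well-defined. Once the compensator is pinned down, the change of variables and the $X_{s-}=X_s$ identification are routine.
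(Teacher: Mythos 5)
Your proposal is correct in substance, but it takes a noticeably more self-contained route than the paper does. The paper's proof is two lines: it cites \cite[Proposition~2.3]{BL1} for the assertion that
\[
\left\{\sum_{s\leq t} \mathbbm{1}_{\{X_{s-}\in A,\,X_s\in B\}}-\int_0^t\!\!\int_B \mathbbm{1}_A(X_s)\,n(X_s,u-X_s)\,du\,ds\right\}_{t\geq 0}
\]
is a $\P^x$-martingale and then invokes optional stopping for the bounded stopping time $T$. What you do is essentially unpack that black box: you re-derive the Lévy system for $X$ from the martingale problem by identifying the predictable compensator of the jump measure, then pass to the compensation formula with the integrand $F(s,h)=\mathbbm{1}_A(X_{s-})\mathbbm{1}_B(X_{s-}+h)$, change variables $u=X_{s-}+h$, and use that $X_{s-}=X_s$ for Lebesgue-a.e.\ $s$ to replace $X_{s-}$ by $X_s$ inside the time integral. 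All of these steps are sound; the disjointness of $A$ and $B$ is used exactly as you say, to make the sum count only genuine jumps.

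Two small comments for calibration. First, the step you yourself flag as the main obstacle — extracting the compensator $n(X_{s-},h)\,dh\,ds$ from the weak martingale problem on $C_b^2(\R^d)$ — is precisely the content of \cite[Proposition~2.3]{BL1}; the paper does not reprove it and neither should you need to, but if you do, the Itô-plus-matching argument you sketch is the right one, and one should also note that uniqueness of the semimartingale characteristics (Jacod–Shiryaev) is what makes the matching conclusive. Second, you invoke the symmetry assumption (\ref{eq:symm_ass}) to make the small-jump drift vanish; this is a convenience rather than a necessity, and indeed the paper's Remark~1.1 explicitly states that symmetry is used only in Proposition~\ref{prop:estp} — the compensator identification and the resulting compensation formula go through without it, since the drift term cancels between Itô's formula and the generator regardless of symmetry. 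So your proof buys self-containedness at the cost of reproducing an argument the paper takes off the shelf, while the paper's proof buys brevity at the cost of a citation.
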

\begin{proof}
By \cite[Proposition 2.3]{BL1} it follows that the process
\[
\left\{\sum_{s\leq t} \mathbbm{1}_{\{X_{s-}\in A, X_s \in B\}}-\int_0^t \int _B \mathbbm{1}_{A}(X_s)\,n(X_s,u-X_s)\, du\right\}_{t\geq 0}
\]
is a $\P^x$-martingale. Therefore the result follows by the optional stopping theorem.
\end{proof}

The following result, taken from the theory of regular variation, will be repeatedly used throughout the paper.
\begin{Prop}\label{prop:help}
Assume that $\ell\colon (0,2)\rightarrow (0,\infty)$ varies slowly at $0$ and let $\beta_1>-1$ and $\beta_2>1$. Then the following is true:
\begin{itemize}
	\item[(i)] $\ds{\int_0^r\,u^{\beta_1}\ell(u)\,du\sim \frac{r^{1+\beta_1}}{1+\beta_1}}\,\ell(r)$ as $ r\to 0+$,
	\item[(ii)] $\ds{\int_r^1\,u^{-\beta_2}\ell(u)\,du\sim \frac{r^{1-\beta_2}}{\beta_2-1}}\,\ell(r)$ as $ r\to 0+$.
\end{itemize}
\end{Prop}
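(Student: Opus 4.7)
The plan is to derive both assertions from the classical Karamata integration theorem for regularly varying functions, which is usually stated at infinity; I would reduce to that case via the substitution $v = 1/u$. Since $\ell$ varies slowly at $0$, the function $\tilde\ell(v) := \ell(1/v)$ varies slowly at $\infty$, and the substitution transforms integrals over $(0,r]$ and $[r,1]$ into integrals over $[1/r,\infty)$ and $[1,1/r]$, respectively.

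For (i), the change of variable gives
\[
\int_0^r u^{\beta_1}\ell(u)\,du = \int_{1/r}^\infty v^{-\beta_1-2}\tilde\ell(v)\,dv.
\]
Because $\beta_1+2>1$, Karamata's theorem at infinity (in the form: for $\gamma>1$, $\int_x^\infty t^{-\gamma}\tilde\ell(t)\,dt \sim x^{1-\gamma}\tilde\ell(x)/(\gamma-1)$ as $x\to\infty$) applied with $\gamma=\beta_1+2$ and $x=1/r$ yields the asserted equivalent $r^{1+\beta_1}\ell(r)/(1+\beta_1)$. For (ii), the same substitution gives
\[
\int_r^1 u^{-\beta_2}\ell(u)\,du = \int_1^{1/r} v^{\beta_2-2}\tilde\ell(v)\,dv,
\]
and since $\beta_2-2>-1$ the companion Karamata statement (for $\rho>-1$, $\int_1^x t^\rho\tilde\ell(t)\,dt \sim x^{\rho+1}\tilde\ell(x)/(\rho+1)$ as $x\to\infty$) with $\rho=\beta_2-2$ delivers $r^{1-\beta_2}\ell(r)/(\beta_2-1)$.

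If one prefers a self-contained proof rather than citing Karamata as a black box, I would invoke the uniform convergence theorem for slowly varying functions, which says $\ell(\lambda u)/\ell(u)\to 1$ uniformly for $\lambda$ in any compact subset of $(0,\infty)$. Then, writing $\int_0^r u^{\beta_1}\ell(u)\,du = r^{1+\beta_1}\int_0^1 s^{\beta_1}[\ell(sr)/\ell(r)]\,ds \cdot \ell(r)$ and splitting the inner integral over $[\varepsilon,1]$ and $[0,\varepsilon]$, the uniform convergence controls the first piece and the integrability $s^{\beta_1}$ near $0$ (using Potter's bounds to dominate $\ell(sr)/\ell(r)$ by $s^{-\varepsilon}$ for small $s$) handles the second; a symmetric argument handles (ii).

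The only genuine subtlety is the use of uniform convergence / Potter's bounds to justify pulling the $\ell(r)$ out of the integral and passing to the limit; beyond that the proof reduces to elementary computation of $\int s^{\beta_1}\,ds$ and $\int s^{-\beta_2}\,ds$. Since Proposition \ref{prop:help} is a standard result from regular variation theory, I would expect the paper either to cite Bingham--Goldie--Teugels and skip the proof, or to give the one-line reduction above.
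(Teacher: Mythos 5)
Your first approach — substituting $v=1/u$ to move the integrals to infinity and then invoking Karamata's theorem from Bingham--Goldie--Teugels — is exactly what the paper does (citing Propositions 1.5.10 and 1.5.8 there). The proposal is correct and matches the paper's proof.
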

\begin{proof}
By a change of variables and using \cite[Proposition 1.5.10]{BGT} we obtain 
\begin{align*}
	\int_0^r\,u^{\beta_1}\ell(u)\,du&=\int_{r^{-1}}^\infty u^{-\beta_1-2}\ell(u^{-1})\,du\sim \frac{r^{1+\beta_1}\ell(r)}{1+\beta_1},
\end{align*}
since $u\mapsto \ell(u^{-1})$ varies slowly at infinity. This proves (i). Similarly, with the help of \cite[Proposition 1.5.8]{BGT} we obtain (ii). 
\end{proof}

\begin{Rem}\label{rem:decr}
Using  \cite[Theorem 1.5.4]{BGT} we conclude that for a function $\ell\colon (0,2)\rightarrow (0,\infty)$ that varies slowly at $0$ there exists a non-increasing function $\phi\colon (0,2)\rightarrow (0,\infty)$ such that 
\begin{equation*}\label{eq:tmp122}
\lim_{r\to 0+}\frac{r^{-d-\alpha}\ell(r)}{\phi(r)}=1.
\end{equation*}
\end{Rem}

Before proving our main probabilistic estimates, note that (\ref{eq:k}) implies
that there exists $\vartheta\in (0,\pi/2]$ such that for every $i \in \{1, \ldots, N\}$ 
\begin{equation}\label{eq:cone1}
	 n(x,h)\geq \delta\,j(|h|)\ \ \textrm{ for all }\, h\in\R^d,\ h\not=0,\ \frac{| \langle h, \eta_i \rangle|}{|h|}\geq \cos\vartheta.
\end{equation}

\subsection{Exit time estimates}
\begin{Prop}\label{prop:estp} There exists a constant $C_1>0$ such that for
every $x_0\in \R^d$, $r\in (0,1)$ and $t>0$
\[
\P^{x_0}(\tau_{B(x_0,r)}\leq t)\leq C_1t\frac{\ell(r)}{r^\alpha}.
\]
\end{Prop}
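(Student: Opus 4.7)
I would use the martingale/Dynkin approach with a carefully scaled $C^2$ test function. Pick a smooth radial cut-off $\psi \in C_b^2(\R^d)$ with $0 \le \psi \le 1$, $\psi \equiv 0$ on $B(0,1/2)$ and $\psi \equiv 1$ outside $B(0,1)$, and set $f(x) = \psi\!\bigl(\tfrac{x-x_0}{r}\bigr)$. Then $f \in C_b^2(\R^d)$, $f(x_0)=0$, $f \ge \mathbbm{1}_{B(x_0,r)^c}$, and $\|D^2 f\|_\infty \le C r^{-2}$. By the defining martingale property of $X$ and the optional stopping theorem applied at the bounded stopping time $T = t \wedge \tau_{B(x_0,r)}$, one gets
\[
\P^{x_0}\!\bigl(\tau_{B(x_0,r)} \le t\bigr) \le \E^{x_0}[f(X_T)] = f(x_0) + \E^{x_0}\!\!\int_0^T \mathcal{L}f(X_s)\,ds \le t\cdot \sup_{x\in\R^d}|\mathcal{L}f(x)|,
\]
reducing the whole task to showing $\sup_x |\mathcal{L}f(x)| \le C_1\,\ell(r)/r^\alpha$ for $r\in(0,1)$.

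For the pointwise bound on $\mathcal{L}f$, I would invoke the symmetry assumption (\ref{eq:symm_ass}) to rewrite
\[
\mathcal{L}f(x)=\tfrac12\!\int\bigl(f(x+h)+f(x-h)-2f(x)\bigr)\,n(x,h)\,dh,
\]
thereby killing the gradient term, and split the $h$-integral into $\{|h|\le r\}$, $\{r<|h|\le 1\}$, and $\{|h|>1\}$. On $\{|h|\le r\}$ a second-order Taylor estimate $|f(x+h)+f(x-h)-2f(x)|\le \|D^2f\|_\infty |h|^2 \le Cr^{-2}|h|^2$ together with $n(x,h)\le \|k_2\|_\infty j(|h|)=\|k_2\|_\infty \ell(|h|)/|h|^{d+\alpha}$ reduces the contribution to $Cr^{-2}\int_0^r s^{1-\alpha}\ell(s)\,ds$, which by Proposition \ref{prop:help}(i) (with $\beta_1=1-\alpha>-1$) is $\sim Cr^{-\alpha}\ell(r)$. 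On $\{r<|h|\le 1\}$ the crude bound $|f(x+h)+f(x-h)-2f(x)|\le 4$ combined with Proposition \ref{prop:help}(ii) (with $\beta_2=1+\alpha>1$) applied to $\int_r^1 s^{-1-\alpha}\ell(s)\,ds$ yields the same $Cr^{-\alpha}\ell(r)$ scaling. On the tail $\{|h|>1\}$, the contribution $4\|k_2\|_\infty\int_{|h|>1} j(|h|)\,dh$ is a finite absolute constant, since $\int(|z|^2\wedge 1)j(|z|)\,dz<\infty$ is a standing hypothesis.

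To package the three pieces into the uniform bound $\sup_x|\mathcal{L}f(x)|\le C_1\,\ell(r)/r^\alpha$ for $r\in(0,1)$, I would use that slow variation of $\ell$ at $0$ forces $r^{-\alpha}\ell(r)\to\infty$ as $r\to 0^+$, so the constant tail contribution is absorbed for small $r$; for $r$ bounded away from $0$ in $(0,1)$, $\ell(r)/r^\alpha$ is bounded above and below by positive constants, so enlarging $C_1$ suffices.

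The only delicate point is that $\ell$ is not a power, so the two inner integrals have no closed form; the key input is Proposition \ref{prop:help}, which extracts the correct $r^{-\alpha}\ell(r)$ scaling from both the near-field Taylor integral and the mid-range size integral, matching exactly the quantity appearing in the conclusion. Everything else --- the construction of $f$, the martingale identity, the Taylor bound, and the use of (\ref{eq:symm_ass}) to discard the drift --- is standard; this is where I expect the main technical step to lie.
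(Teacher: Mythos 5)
Your proposal is correct and follows the same route as the paper: a scaled $C^2$ cutoff as test function, optional stopping for the associated Dynkin martingale, and a three-region split of the integral defining $\mathcal{L}f$ with the near-field Taylor bound, the mid-range crude bound, and the tail absorbed via $r^{-\alpha}\ell(r)\to\infty$, both near- and mid-range handled by Proposition~\ref{prop:help}. The only cosmetic differences are that you normalize the cutoff to height $1$ rather than $r^2$ and you pass to the second-difference form to discard the drift term, whereas the paper keeps the original form of $\mathcal{L}$ and cancels the gradient contribution on $r<|h|\le 1$ directly by the symmetry of $n(x,\cdot)$; these are interchangeable.
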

\begin{proof} Again, we closely follow the ideas in \cite{BL1}. Let $x_0\in
\R^d$, $r\in (0,1)$ and let $f\in C^2(\R^d)$ be a positive function such that
\[
f(x)=\left\{\begin{array}{cl}
|x-x_0|, & |x-x_0|\leq \frac{r}{2}\\
r^2,& |x-x_0|\geq r
\end{array}\right.
\]
and
\[
|f(x)|\leq c_1 r^2, \ \ \left|\frac{\partial f}{\partial x_i}(x)\right|\leq c_1 r \ \ \textrm{ and } \ \ \left|\frac{\partial^2 f}{\partial x_i \partial x_j}(x)\right|\leq c_1,
\]
for some constant $c_1>0$.

Let $x\in B(x_0,r)$. We estimate $\mathcal{L}f(x)$ in a few steps.

First
\begin{align*}
&\ \ \ \int_{B(x_0,r)}\left(f(x+h)-f(x)-\langle\nabla f(x), h\rangle\mathbbm{1}_{\{|h|\leq 1\}}\right)n(x,h)\,dh\\& \leq c_2\int_{B(x_0,r)}|h|^2n(x,h)\,dh\leq c_2\ \int_{B(x_0,r)}|h|^{2-d-\alpha}\ell(|h|)\,dh\\
&\leq c_3 r^{2-\alpha}\ell(r),
\end{align*}
where in the last line we have used Proposition \ref{prop:help} (i).
Similarly, by Proposition \ref{prop:help} (ii) on $B(x_0,r)^c$ we get
\begin{align*}
&\ \ \  \int_{B(x_0,r)^c}\left(f(x+h)-f(x)\right)n(x,h)\,dh\leq \|f\|_\infty \int_{B(x_0,r)^c}n(x,h)\,dh\\
& \leq \|f\|_\infty \left(\int_{B(x_0,1)\setminus B(x_0,r)}|h|^{-d-\alpha}\ell(|h|)\,dh+\int_{B(x_0,1)^c}n(x,h)\,dh\right)\\
& \leq c_1r^2\left(c_4 r^{-\alpha}\ell(r)+c_5 \right)
\leq c_6r^{2-\alpha}\ell(r).
\end{align*}
In the last inequality we have used the fact that $\ds{\lim_{r\to
0+}}r^{-\alpha}\ell(r)=\infty$ (cf. \cite[Proposition 1.3.6 (v)]{BGT}). Finally,
by symmetry of the kernel, we have 
\begin{align}
\int_{B(x_0,1)\setminus B(x_0,r)} \langle h, \nabla f(x) \rangle n(x,h)\,dh=0.
\end{align}
Therefore, by preceding estimates, we conclude that there is a constant $c_7>0$
such that for all $x\in \R^d$ and $r \in (0,1)$ 
\begin{equation}\label{eq:prop1e3halbe}
\mathcal{L}f(x)\leq c_7 r^{2-\alpha} \ell(r).
\end{equation}
It follows from the optional stopping theorem that
\begin{equation}\label{eq:prop1e2}
\E^{x_0} f(X_{t\wedge \tau_{B(x_0,r)}})-f(x_0)=\E^{x_0} \int_0^{t\wedge \tau_{B(x_0,r)}}\mathcal{L}f(X_s)\, ds\leq c_7 t r^{2-\alpha} \ell(r),\ \ t>0.
\end{equation}
On $\{\tau_{B(x_0,r)}\leq  t\}$ one has $X_{t\wedge
\tau_{B(x_0,r)}}\not\in
B(x_0,r)$ and so $f(X_{t\wedge \tau_{B(x_0,r)}})\geq r^2$. Then  (\ref{eq:prop1e2}) gives
\[
\P^{x_0}(\tau_{B(x_0,r)}\leq t)\leq c_7 t\,r^{-\alpha}\ell(r).
\]
\end{proof} 
\begin{Prop}\label{prop:est2} There exists a constant $C_2>0$ such that for
every $r\in (0,1)$  and $ x_0\in \R^d$  
\[
\inf_{y\in B(x_0,r/2)}\E^y \tau_{B(x_0,r)}\geq C_2\frac{r^\alpha}{\ell(r)}.
\]
\end{Prop}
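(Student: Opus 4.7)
The plan is to obtain the lower bound on the mean exit time from the upper bound on small exit probabilities given by Proposition \ref{prop:estp}. Since we need a bound valid for all starting points $y\in B(x_0,r/2)$, the first move is to replace $B(x_0,r)$ by a concentric ball around $y$: for $y\in B(x_0,r/2)$ one has $B(y,r/2)\subset B(x_0,r)$, hence $\tau_{B(y,r/2)}\leq \tau_{B(x_0,r)}$ $\P^y$-a.s., and it suffices to lower bound $\E^y\tau_{B(y,r/2)}$.

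Next I would apply Proposition \ref{prop:estp} to the ball $B(y,r/2)$, which gives
\[
\P^y\bigl(\tau_{B(y,r/2)}\leq t\bigr)\leq C_1\,t\,\frac{\ell(r/2)}{(r/2)^\alpha}\qquad\text{for all }t>0,\ r\in(0,1).
\]
Choose $t_0=\frac{1}{2C_1}\cdot\frac{(r/2)^\alpha}{\ell(r/2)}$ so that $\P^y(\tau_{B(y,r/2)}\leq t_0)\leq \tfrac{1}{2}$, hence $\P^y(\tau_{B(y,r/2)}> t_0)\geq\tfrac{1}{2}$. Markov's inequality then yields
\[
\E^y\tau_{B(y,r/2)}\geq t_0\,\P^y(\tau_{B(y,r/2)}> t_0)\geq \frac{1}{4C_1}\cdot\frac{(r/2)^\alpha}{\ell(r/2)}.
\]

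Finally I would replace $\ell(r/2)$ by $\ell(r)$ up to a multiplicative constant, uniformly in $r\in(0,1)$. This is the only non-mechanical point: I would invoke the uniform convergence theorem for slowly varying functions (see \cite[Theorem~1.2.1]{BGT} applied to $u\mapsto\ell(u^{-1})$, which varies slowly at infinity) to conclude that $\ell(r/2)/\ell(r)\to 1$ as $r\to 0+$, and then combine this with the assumed boundedness of $\ell$ away from $0$ and $\infty$ on compact subsets of $(0,2)$ to deduce that $\sup_{r\in(0,1)}\ell(r/2)/\ell(r)<\infty$. Absorbing the resulting factor together with $2^{-\alpha}$ into a new constant $C_2>0$ gives
\[
\E^y\tau_{B(x_0,r)}\geq \E^y\tau_{B(y,r/2)}\geq C_2\,\frac{r^\alpha}{\ell(r)},
\]
uniformly in $y\in B(x_0,r/2)$ and $r\in(0,1)$, which is the claim. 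The main (and only real) obstacle is this uniform comparison of $\ell(r/2)$ and $\ell(r)$; once handled via the regular variation machinery the rest is the standard Markov-inequality step.
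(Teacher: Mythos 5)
Your proof is correct and follows essentially the same route as the paper: reduce to the concentric ball $B(y,r/2)\subset B(x_0,r)$, apply Proposition \ref{prop:estp}, pick $t_0$ so the exit probability is at most $1/2$, and conclude by Markov's inequality. The only difference is that you explicitly justify replacing $\ell(r/2)$ by $\ell(r)$ up to a uniform constant (a point the paper silently absorbs into its constant, writing $C_1\,t\,r^{-\alpha}\ell(r)$ directly even though a literal application of Proposition \ref{prop:estp} at radius $r/2$ yields $C_1\,t\,(r/2)^{-\alpha}\ell(r/2)$); note that for this step you only need the defining pointwise limit $\ell(r/2)/\ell(r)\to 1$ together with the assumed boundedness away from $0$ and $\infty$ on compacts, so the uniform convergence theorem, while valid, is stronger than necessary.
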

\begin{proof} Let $r\in (0,1)$, $x_0\in \R^d$ and $y\in B(x_0,r/2)$. Using
Proposition \ref{prop:estp} we obtain
\[
	\P^y(\tau_{B(x_0,r)}\leq t)\leq \P^y(\tau_{B(y,r/2)}\leq t)\leq C_1\,t\,r^{-\alpha}\ell(r)\ \textrm{ for }\ t>0.
\]
Let 
\[
t_0=\frac{r^{\alpha}}{2 C_1\ell(r)}.
\]
Then
\[
	 \E^y \tau_{B(x_0,r)}\geq t_0\P^y(\tau_{B(x_0,r)}\geq t_0)\geq \frac{r^{\alpha}}{2 C_1\ell(r)}.
\]
\end{proof}

\begin{Prop}\label{prop:est1} There exists a constant $C_3>0$ such that for
every $r\in (0,\frac{1}{2})$  and $ x_0\in \R^d$  
\[
\sup_{y\in B(x_0,r)}\E^y \tau_{B(x_0,r)}\leq C_3\frac{r^\alpha}{\ell(r)}.
\]
\end{Prop}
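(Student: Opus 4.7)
The plan is to construct a smooth bounded test function $\phi$ that vanishes on $\overline{B(x_0,r)}$ and takes the value $1$ outside $B(x_0,2r)$, and then to apply optional stopping to the associated martingale, mirroring the structure of Proposition~\ref{prop:estp}. Concretely, I would take $\phi\in C_b^2(\R^d)$ with $0\le\phi\le 1$, $\phi\equiv 0$ on $\overline{B(x_0,r)}$ and $\phi\equiv 1$ on $B(x_0,2r)^c$. For any $y$ in the open ball $B(x_0,r)$ we then have $\phi(y)=0$ and $\nabla\phi(y)=0$, because $\phi$ is identically zero on a neighborhood of $y$. Therefore
\[
\mathcal{L}\phi(y) = \int_{\R^d\setminus\{0\}}\phi(y+h)\,n(y,h)\,dh \;\ge\; \int_{\{|h|>3r\}} n(y,h)\,dh,
\]
since for $y\in B(x_0,r)$ and $|h|>3r$ the triangle inequality yields $|y+h-x_0|>2r$, so $\phi(y+h)=1$.

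The next step is to bound the remaining integral from below using the cone nondegeneracy (\ref{eq:cone1}) in direction $\eta_1$ together with assumption (J1):
\[
\int_{\{|h|>3r\}} n(y,h)\,dh \;\ge\; \delta\,\omega\int_{3r}^{1} s^{-1-\alpha}\ell(s)\,ds,
\]
where $\omega>0$ is the surface measure on $S^{d-1}$ of the relevant conical region around $\pm\eta_1$. By Proposition~\ref{prop:help}(ii) with $\beta_2=1+\alpha$, the last integral is asymptotic to $(3r)^{-\alpha}\ell(3r)/\alpha$ as $r\to 0+$; combined with slow variation of $\ell$ and its boundedness above and below on compact subintervals, this gives a uniform bound $\mathcal{L}\phi(y)\ge c_0\, r^{-\alpha}\ell(r)$ for some positive constant $c_0$.

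To conclude, I apply the optional stopping theorem to the martingale $\phi(X_t)-\phi(X_0)-\int_0^t\mathcal{L}\phi(X_s)\,ds$ at the bounded stopping time $\tau\wedge t$, where $\tau=\tau_{B(x_0,r)}$. Since $X_s\in B(x_0,r)$ for $s<\tau$ and $\phi(y)=0$,
\[
1 \;\ge\; \E^y\phi(X_{\tau\wedge t}) \;=\; \E^y\!\int_0^{\tau\wedge t}\!\mathcal{L}\phi(X_s)\,ds \;\ge\; c_0\,r^{-\alpha}\ell(r)\,\E^y[\tau\wedge t],
\]
and monotone convergence as $t\to\infty$ yields $\E^y\tau\le r^\alpha/(c_0\ell(r))$, uniformly in $y\in B(x_0,r)$. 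The main technical obstacle is to make the cone integral lower bound uniform in $r\in(0,1/2)$: for small $r$ this follows directly from Proposition~\ref{prop:help}(ii), while for $r$ close to $1/2$ the interval $[3r,1]$ shrinks and the estimate above degenerates. This can be repaired by replacing the buffer radius $2r$ with $(1+\mu)r$ for a fixed small $\mu>0$, so that the sufficient condition $|h|>(2+\mu)r$ stays below $1$ on the bulk of the range, and by absorbing the residual narrow $r$-interval into the constant $C_3$.
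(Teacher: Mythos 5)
Your test-function-plus-optional-stopping route is genuinely different from the paper's argument, which never constructs a $C^2$ barrier. The paper introduces $S$, the first time the process makes a jump of size exceeding $2r$, bounds $\P^y\bigl(S\leq r^\alpha/\ell(r)\bigr)$ from below via the L\'evy-system identity of Proposition~\ref{prop:comp}, observes $\tau_{B(x_0,r)}\leq S$, and then iterates with the Markov property to obtain a geometric tail bound for $\tau_{B(x_0,r)}$ before summing. Your approach replaces the iteration by a single application of optional stopping and the pointwise lower bound $\mathcal{L}\phi(y)\geq c_0\,r^{-\alpha}\ell(r)$ inside the ball; this is shorter and is exactly the barrier argument one would use in PDE. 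Both proofs rest on the same analytic core: the rate of jumps large enough to exit the ball from any interior point is bounded below by a constant multiple of $\ell(r)/r^\alpha$, obtained from the cone bound~(\ref{eq:cone1}) together with Proposition~\ref{prop:help}(ii). Up to and including the optional-stopping step, your argument is correct as written (for $y$ in the open ball, $\phi(y)=\nabla\phi(y)=0$, so $\mathcal{L}\phi(y)=\int\phi(y+h)n(y,h)\,dh\geq\int_{\{|h|>3r\}}n(y,h)\,dh$ is fine).

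The weak point you flag at the end is real, but your proposed repair does not close it, and it is worth understanding why. Replacing the outer radius $2r$ by $(1+\mu)r$ changes the cutoff in the jump integral to $(2+\mu)r$, which stays below $1$ only for $r<\tfrac{1}{2+\mu}$; moreover, for $r$ just below $\tfrac{1}{2+\mu}$ one has $\int_{(2+\mu)r}^{1}s^{-1-\alpha}\ell(s)\,ds\to 0$ while $\ell(r)/r^\alpha$ stays bounded away from $0$, so the required inequality $\int_{(2+\mu)r}^{1}s^{-1-\alpha}\ell(s)\,ds\geq c_0\,r^{-\alpha}\ell(r)$ degenerates and there is no finite estimate to be ``absorbed into $C_3$''. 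Shrinking $\mu$ only moves the failure closer to $\tfrac12$; it never removes it. To be fair, the paper's own inequality ``$\int_{2r}^1 \ell(t)t^{-1-\alpha}\,dt\geq c_2\,\ell(r)r^{-\alpha}$'' has exactly the same defect near $r=\tfrac12$, since Proposition~\ref{prop:help}(ii) is only an asymptotic as $r\to 0^+$, so you are in good company in leaving this unaddressed. A genuine fix must exploit that a jump from the current position need not have size $>2r$ to leave $B(x_0,r)$: e.g.\ a jump of size $s\in\bigl(r(1+\sin\vartheta),1\bigr]$ in the direction $\pm\eta_i$ whose sign matches $\operatorname{sgn}\langle X_s-x_0,\eta_i\rangle$ already lands outside $B(x_0,r)$. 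In your framework, where the test function $\phi$ is fixed in advance and not adapted to $X_s$, this refinement forces you to split into two regimes (a multiplicative outer radius $2r$ for small $r$, an additive one $r+\varepsilon_0$ for $r$ bounded away from $0$); the paper's $S$-based formulation, in which the relevant jump set can be made position-dependent, absorbs the refinement more naturally.
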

\begin{proof} Let $r\in (0,\frac 12)$, $x_0\in \R^d$ and $y\in B(x_0,r)$. 
Denote by  $S$ the first time when process $(X_t)_{t\geq 0}$ has a jump larger than $2r$, i.e.
\[
S=\inf\{t>0\colon |X_t-X_{t-}|> 2r\}.
\]
Assume first that $\P^y(S\leq \frac{r^{\alpha}}{\ell(r)})\leq \frac 12$. Then by
Proposition \ref{prop:comp} 
\begin{align}
	\P^y\left(S\leq \tfrac{r^{\alpha}}{\ell(r)}\right)&=\E^y
\left[\sum_{s\leq \frac{r^{\alpha}}{\ell(r)}\wedge
S}\mathbbm{1}_{\{|X_s-X_{s-}|>2r\}}\right]\nonumber\\
	&=\E^y \left[\int_0^{\frac{r^{\alpha}}{\ell(r)}\wedge S}\int_{B(0,2r)^c}n(X_s,h)\,dh\,ds\right]\label{eq:lowest1}
\end{align}

Choose arbitrary $\xi_0 \in \{\eta_1, \ldots, \eta_N\}$ and let $\vartheta$ be as in (\ref{eq:cone1}). Then
\begin{align*}
	\int_{B(0,2r)^c}n(X_s,h)\,dh&\geq \int_{\left\{h\in\R^d\colon 2r\leq |h|<1,\frac{| \langle h, \xi_0 \rangle |}{|h|}\geq \cos \vartheta\right\}}n(X_s,h)\,dh\\
	&\geq \delta\int_{\left\{h\in\R^d\colon 2r\leq |h|<1,\frac{| \langle h, \xi_0 \rangle |}{|h|}\geq \cos \vartheta\right\}} \frac{\ell(|h|)}{|h|^{d+\alpha}}\,dh\\
	&\geq c_1\int_{2r}^1\frac{\ell(t)}{t^{1+\alpha}}\,dt\geq c_2 \frac{\ell(r)}{r^\alpha},
\end{align*}
where in the last inequality we have used Proposition \ref{prop:help} (ii). Using this estimate we get from (\ref{eq:lowest1})  the following estimate
\begin{align*}
	\P^y\left(S\leq \frac{r^{\alpha}}{\ell(r)}\right)&\geq c_2\frac{\ell(r)}{r^{\alpha}}\E^y\left[\frac{r^{\alpha}}{\ell(r)}\wedge S\right]\\
	&\geq c_2\P^y\left(S>\frac{r^{\alpha}}{\ell(r)}\right)\geq \frac{c_2}{2}.
\end{align*}
Therefore, in any case the following inequality holds:
\[
\P^y\left(S\leq \frac{r^\alpha}{\ell(r)}\right)	\geq \frac{1}{2}\wedge \frac{c_2}{2}.
\]

 Since $S\geq \tau_{B(x_0,r)}$ we conclude
\[\P^y\left( \tau_{B(x_0,r)}\leq \frac{r^\alpha}{\ell(r)}\right)\geq \P^y\left(S\leq \frac{r^\alpha}{\ell(r)}\right)\geq c_3,\]
 with $c_3=\frac{1}{2}\wedge \frac{c_2}{2}$. By the Markov property, for $m\in
\N$ we obtain
 \begin{align*}
 \P^y\left(\tau_{B(x_0,r)}>(m+1)\frac{r^\alpha}{\ell(r)}\right)&\leq \P^y\left(\tau_{B(x_0,r)}>m\frac{r^\alpha}{\ell(r)},\tau_{B(x_0,r)}\circ \theta_{m\frac{r^\alpha}{\ell(r)}}>\frac{r^\alpha}{\ell(r)}\right)\\
 &=\E^y\left[\P^{X_{m\frac{r^\alpha}{\ell(r)}}}\left(\tau_{B(x_0,r)}>\frac{r^\alpha}{\ell(r)}\right);\tau_{B(x_0,r)}>m\frac{r^\alpha}{\ell(r)}\right]\\&\leq (1-c_3)\P^y\left(\tau_{B(x_0,r)}>m\frac{r^\alpha}{\ell(r)}\right),
 \end{align*}
 where $\theta_s$ denotes the usual shift operator.
By iteration we obtain
 \[
 	\P^y\left(\tau_{B(x_0,r)}>m\frac{r^\alpha}{\ell(r)}\right)\leq (1-c_3)^m,\ m\in \N.
 \]
 Finally,
\begin{align*}
\E^y \tau_{B(x_0,r)}&\leq  \frac{r^\alpha}{\ell(r)}\sum_{m=0}^\infty(m+1)\P^y\left(\tau_{B(x_0,r)}>m\frac{r^\alpha}{\ell(r)}\right)\\&\leq \frac{r^\alpha}{\ell(r)}\sum_{m=0}^\infty (m+1)(1-c_3)^m \leq c_4 \frac{r^\alpha}{\ell(r)}.
\end{align*}
\end{proof}

\subsection{Krylov-Safonov type estimate}

Fix $\vartheta \in (0,\pi/2]$ such that (\ref{eq:cone1}) holds.

\begin{Prop}\label{prop:ks}Let $\lambda \in
\left(0,\frac{\sin\vartheta}{8}\right]$. There exists a constant
$C_4=C_4(\lambda)>0$ such that for every $x_0\in\R^d$, $r\in (0,\frac 12)$,
closed
set $A\subset B(x_0,\lambda r)$
and $x\in B(x_0,\lambda r)$,
\[
\P^x(T_A<\tau_{B(x_0,r)})\geq C_4\frac{|A|}{|B(x_0,r)|}.
\]
\end{Prop}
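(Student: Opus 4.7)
The plan is to adapt the Bass--Levin proof of the Krylov--Safonov-type estimate. In the Bass--Levin setting the argument reduces, via the L\'evy-system identity (Proposition \ref{prop:comp}), to the uniform lower bound $\int_A n(X_s, y-X_s)\,dy \geq c\,\phi(r)|A|$, which is immediate from $n(x,h) \geq c_1 |h|^{-d-\alpha}$. Under the weaker assumptions \eqref{eq:main_assum}--\eqref{eq:k}, this bound fails: if $X_s$ and $y$ both lie in $B(x_0,\lambda r)$, the direction $(y-X_s)/|y-X_s|$ need not fall in any $S_i$, and $n(X_s,y-X_s)$ may vanish outright. My idea is to route the process from $x$ to $A$ through an auxiliary ``relay'' ball $G$ placed far from $x_0$ along a cone direction $\eta = \eta_1$, so that every direction from a neighbourhood of the starting position to $G$, and every direction from a neighbourhood of $G$ to $A$, makes angle at most $\vartheta$ with $\eta$; then \eqref{eq:cone1} produces the lower bound $n(z,y-z) \geq \delta\,j(|y-z|) \geq c\,\delta\,\phi(r)$ (using Remark \ref{rem:decr} together with $|y-z| \leq 2r$).

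Concretely I take $G := B(x_0 + R\eta,\,\lambda r)$ with $R$ of order $r$ (for instance $R = 3\lambda r(1+\sin\vartheta)/\sin\vartheta$). An elementary angle/inclusion computation shows that the hypothesis $\lambda \leq \sin\vartheta/8$ is precisely what guarantees (a) $G \subset B(x_0,r)$, (b) $G$ disjoint from $B(x_0,2\lambda r)$, and (c) for every $z$ with $|z-x_0| \leq 2\lambda r$ and every $y \in G$ -- and symmetrically with the roles swapped -- the angle between $y-z$ and $\eta$ is at most $\vartheta$. In particular $n(z,y-z) \geq c\,\delta\,\phi(r)$ uniformly in the two configurations needed below.

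For Step A, fix $x \in B(x_0,\lambda r)$ and set $\sigma := \tau_{B(x,\lambda r)}$. Since $B(x,\lambda r) \subset B(x_0,2\lambda r) \subset B(x_0,r)$ is disjoint from $G$, Proposition \ref{prop:comp} gives
\[
\P^x(X_\sigma \in G) \;=\; \E^x\!\int_0^\sigma\!\!\int_G n(X_s, y-X_s)\,dy\,ds \;\geq\; c\,\delta\,\phi(r)\,|G|\,\E^x[\sigma].
\]
Combining with $\E^x[\sigma] \geq C_2 (\lambda r)^\alpha/\ell(\lambda r)$ (Proposition \ref{prop:est2}), the asymptotic $\phi(r) \sim r^{-d-\alpha}\ell(r)$, and the slow variation of $\ell$ (which bounds $\ell(r)/\ell(\lambda r)$ by a constant depending on $\lambda$), the factors of $r$ cancel to yield $\P^x(T_G \leq \tau_{B(x_0,r)}) \geq \P^x(X_\sigma \in G) \geq p_0(\lambda,\vartheta,\delta) > 0$. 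Step B is entirely symmetric: for any $z \in G$, the same computation with $B(z,\lambda r)$ in place of $B(x,\lambda r)$ and $A$ in place of $G$ gives
\[
\P^z(T_A < \tau_{B(x_0,r)}) \;\geq\; c\,\delta\,\phi(r)\,|A|\,C_2(\lambda r)^\alpha/\ell(\lambda r) \;\geq\; C(\lambda,\vartheta,\delta)\,\tfrac{|A|}{|B(x_0,r)|}.
\]
Applying the strong Markov property at $T_G$ on $\{T_G < \tau_{B(x_0,r)}\}$ then concludes:
\[
\P^x(T_A < \tau_{B(x_0,r)}) \;\geq\; \P^x(T_G < \tau_{B(x_0,r)})\,\inf_{z\in G}\P^z(T_A < \tau_{B(x_0,r)}) \;\geq\; C_4(\lambda)\,\tfrac{|A|}{|B(x_0,r)|}.
\]

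The chief obstacle is the geometric bookkeeping in the setup. One has to choose three parameters -- the displacement $R$, the radius $\rho = \lambda r$ of $G$, and the radius $\mu = \lambda r$ of the auxiliary ball used to invoke Proposition \ref{prop:est2} -- so that $G \subset B(x_0,r)$, $G$ is disjoint from the auxiliary ball (on both sides of the relay), and the angular condition $R \geq (\rho + \lambda r + \mu)(1+\sin\vartheta)/\sin\vartheta$ holds simultaneously. A short computation shows that $\lambda \leq \sin\vartheta/8$ is exactly the compatibility threshold among these constraints. Once the geometry is in place, the rest of the proof is a routine combination of Propositions \ref{prop:comp} and \ref{prop:est2} with Remark \ref{rem:decr} and standard slow-variation estimates.
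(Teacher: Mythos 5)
Your proof is correct and takes essentially the same approach as the paper: route the process from $x$ to $A$ through a relay ball placed along a cone direction $\eta_i$ so that the directional condition (\ref{eq:cone1}) gives a pointwise lower bound on the jump kernel, then combine the L\'evy-system identity (Proposition \ref{prop:comp}) with the exit-time lower bound (Proposition \ref{prop:est2}) and the strong Markov property. The paper places the relay at $\tilde{x}_0 = x_0 - \tfrac{r}{2}\xi_0$ and chains two hard exit events (exit $B(x_0,2\lambda r)$ into $\tilde B_{\lambda r}$, then exit $\tilde B_{2\lambda r}$ into $A$), whereas you use $G = B(x_0+R\eta,\lambda r)$ with $R$ of order $\lambda r/\sin\vartheta$ and chain hitting times; the geometric constants differ but the mechanism is identical. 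One small inaccuracy: with your choice of $R$, the compatibility bound works out to $\lambda \le \sin\vartheta/(3+4\sin\vartheta)$, for which $\lambda \le \sin\vartheta/8$ is sufficient but not the exact threshold -- unlike the paper's $r/2$ displacement, where the computation $\sqrt{1-(8\lambda)^2}\ge\cos\vartheta$ gives exactly $\lambda\le\sin\vartheta/8$. This does not affect the validity of the argument.
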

\begin{proof}
Choose arbitrary $\xi_0 \in \{\eta_1, \ldots, \eta_N\}$ and set
$\tilde{x}_0=x_0-\frac{r}{2}\xi_0$. The idea is to choose $\lambda\in (0,\frac
18]$ such that 
\begin{equation}\label{eq:cone2}
	\frac{|\langle u-v , \xi_0 \rangle |}{|u-v|}\geq \cos\vartheta
\end{equation}
for all  $u\in B(x_0,2\lambda r),\ v\in B(\tilde{x}_0,2\lambda r)$.
Since for every $u\in B(x_0,2\lambda r)$ and $v\in B(\tilde{x}_0,2\lambda r)$ 
\[
	\frac{| \langle u-v, \xi_0\rangle|}{|u-v|}\geq \frac{\sqrt{(\frac r 4)^2-(2\lambda r)^2}}{\frac r 4}=\sqrt{1-(8\lambda)^2}.
\]
 it is enough to choose $\lambda\in (0,\frac 18]$ such that
\[
	\sqrt{1-(8\lambda)^2}\geq \cos\vartheta,
\]
or, more explicitly,
\[
	\lambda\leq  \frac{\sin\vartheta}{8}.
\]
{
\begin{figure}
 \centering 
\psset{unit=0.5cm}
\fontsize{11}{10}\selectfont
\begin{pspicture*}(-5,-8)(15,8)
\psset{linewidth=\pslinewidth}
\psset{dotsize=5pt 0}
\pstGeonode[PosAngle={60,60,90},PointSymbol=*,PointName=v](1.8,3){u}
\pstGeonode[PosAngle={0,60,90},PointSymbol=*](5,0){x_0}
\pstGeonode[PosAngle={-60,-135,90},PointSymbol=*,PointName=\widetilde{x}_0](1.40
,2){w}
\pstGeonode[PosAngle={0,-135,90},PointSymbol=none,PointName=none](12.5,0){q}
(3.80,0){t}(4.5,3.3){m}
\pstCircleOA[linecolor=black,linewidth=0.04,linestyle=dotted,
dotsep=0.05]{x_0}{q}
\pstCircleOA[linecolor=black,linewidth=0.04,linestyle=dotted,
dotsep=0.05]{x_0}{w}
\pstCircleOA[linecolor=black,linewidth=0.06]{x_0}{t}
\pstCircleOA[linecolor=black,linewidth=0.06,Radius=\pstDistAB{x_0}{t}]{w}{}
\pstTranslation[PointName=none,PointSymbol=none]{w}{u}{x_0}
\pstLineAB[linecolor=black,nodesepA=12,nodesepB=8,linestyle=dotted,dotsep=0.05,
linewidth=0.06]{u}{x_0'}
\pstOrtSym[PointName=none,PointSymbol=none]{u}{x_0'}{m}
\pstLineAB[linecolor=black,nodesepA=12,nodesepB=8]{u}{m}
\pstLineAB[linecolor=black,nodesepA=12,nodesepB=8]{u}{m'}
\pstMarkAngle[MarkAngleRadius=2.4,arrows=-,LabelSep=2]{x_0'}{u}{m}{
$\vartheta$}
\pstMarkAngle[MarkAngleRadius=2.4,arrows=-,LabelSep=2]{m'}{u}{x_0'}{
$\vartheta$}
\pstGeonode[PointName=none,PointSymbol=none](9.1,0.3){t1}(11,1.6){t2}
\ncline[linecolor=black,linestyle=dotted,
dotsep=0.06, arrowscale=2]{->}{t2}{t1}
\rput(10.6,2){$B(x_0,\frac{r}{2})$}
\end{pspicture*}
\caption{\label{fig:one} The choice of $\widetilde{x}_0$ and $\lambda$.}
\end{figure}
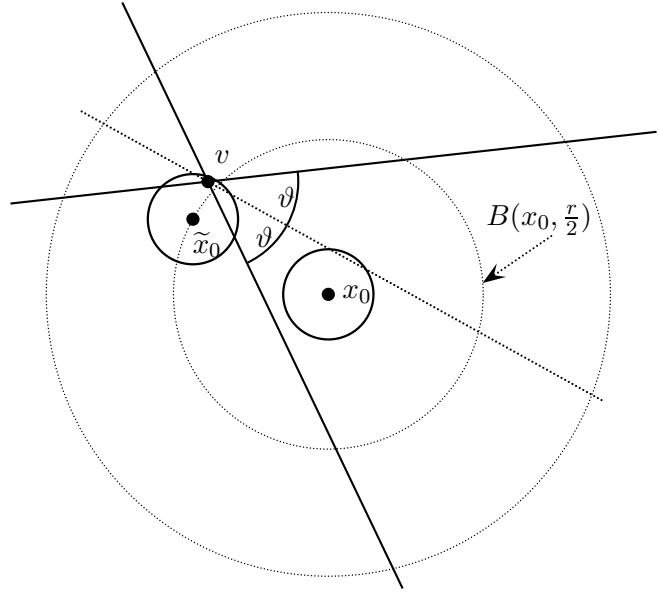

}
For $s>0$ we denote $B(x_0,s)$ and $B(\tilde{x}_0,s)$ by $B_s$ and
$\tilde{B}_s$. Let $r\in (0,1)$, $\lambda\in (0,\frac{\sin\vartheta}{8}]$, $x\in
B_{\lambda r}$ and let $A\subset B_{\lambda r}$ be a
closed subset. The strong Markov property now implies
\begin{align}
	\P^x(T_A<\tau_{B_r})& \geq \P^x\left(X_{\tau_{B_{2\lambda r}}}\in \tilde{B}_{\lambda r},X_{\tau_{\tilde{B}_{2\lambda r}}}\circ \theta_{\tau_{B_{2\lambda r}}}\in A\right)\nonumber\\
	&=\E^x\left[\P^{X_{\tau_{B_{2\lambda r}}}}(X_{\tau_{\tilde{B}_{2\lambda r}}}\in A);X_{\tau_{B_{2\lambda r}}}\in \tilde{B}_{\lambda r}\right].\label{eq:smp1}
\end{align}
For every $y\in \tilde{B}_{\lambda r}$ and $t>0$ Proposition \ref{prop:comp}
and  (\ref{eq:cone2}) yield
\begin{align*}
	\P^y&(X_{\tau_{\tilde{B}_{2\lambda r}}\wedge t}\in A)=
\E^y\left[\sum_{s\leq \tau_{\tilde{B}_{2\lambda r}}\wedge
t}\mathbbm{1}_{\{X_{s-}\not=X_s,X_s\in A\}}\right]\\
	&=\E^y\left[\int_0^{\tau_{\tilde{B}_{2\lambda r}}\wedge t}\int_A
n(X_s,z-X_s)\,dz\,ds\right] \geq
\delta\,\E^y\left[\int_0^{\tau_{\tilde{B}_{2\lambda r}}\wedge t}\int_A
\frac{\ell(|z-X_s|)}{|z-X_s|^{d+\alpha}}\,dz\,ds\right].
\end{align*}
Letting $t\to \infty$ and using the monotone convergence theorem we deduce
\begin{equation*}
\P^y(X_{\tau_{\tilde{B}_{2\lambda r}}}\in A)\geq \delta\,\E^y\left[\int_0^{\tau_{\tilde{B}_{2\lambda r}}}\int_A \frac{\ell(|z-X_s|)}{|z-X_s|^{d+\alpha}}\,dz\,ds\right].
\end{equation*}
Since $|z-X_s|\leq r/2+4\lambda r\leq r$, by Remark \ref{rem:decr} we conclude
\begin{align*}
	\P^y(X_{\tau_{\tilde{B}_{2\lambda r}}}\in A)&\geq c_1\frac{\ell(r)}{r^{d+\alpha}}|A|\E^y\tau_{\tilde{B}_{2\lambda r}}\\
	&\geq c_2 \ell(r)\frac{|A|}{|B_r|}r^{-\alpha} \E^y\tau_{\tilde{B}_{2\lambda r}}.
\end{align*}
Using  Proposition \ref{prop:est2} we deduce
\begin{equation}\label{eq:tmphh100}
	\P^y(X_{\tau_{\tilde{B}_{2\lambda r}}}\in A)\geq c_3 \frac{\ell(r)}{\ell(2\lambda r)}\lambda^\alpha\frac{|A|}{|B_r|}.
\end{equation}
Since $\ell$ varies slowly at $0$ we finally obtain
\begin{equation}\label{eq:ks1}
	\P^y(X_{\tau_{\tilde{B}_{2\lambda r}}}\in A)\geq c_4 \frac{|A|}{|B_r|}\ \ \textrm{ for all }\ y\in \tilde{B}_{\lambda r},
\end{equation}
for some constant $c_4=c_4(\lambda)>0$.
By symmetry and (\ref{eq:ks1}) we deduce
\begin{equation}\label{eq:ks2}
	\P^x(X_{\tau_{B_{2\lambda r}}}\in \tilde{B}_{\lambda r})\geq c_4 \frac{|\tilde{B}_{\lambda r}|}{|\tilde{B}_r|}\ \textrm{ for all }\ x\in B_{\lambda r}.
\end{equation}
Finally, by (\ref{eq:smp1}), (\ref{eq:ks1}) and (\ref{eq:ks2}) we get
\[
	\P^x(T_A<\tau_{B_r})\geq c_4^2 \lambda^{d}\frac{|A|}{|B_r|}.
\]
\end{proof}

\subsection{Restricted Harnack inequality} The aim of this subsection is to establish a Harnack inequality for a restricted class of harmonic functions.  

The following lemma can be proved similarly as \cite[Lemma 2.7]{Mi}.
\begin{Lem}\label{lem:ujs}
	Let $g\colon (0,\infty)\rightarrow [0,\infty)$ be a function  satisfying
	\[
		g(s)\leq c g(t)\ \textrm{ for all }\ 0<t\leq s,
	\]
	for some constant $c>0$. There is a constant $c'>0$ such that for any $x_0\in \R^d$ and $r>0$  we have 
	\[
		g(|z-x|)\leq c' r^{-d}\int_{B(x_0,r)}g(|z-u|)\,du,
	\]
	for all $x\in B(x_0,r/2)$ and $z\in B(x_0,2r)^c$.
\end{Lem}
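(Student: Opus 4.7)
The plan is a purely geometric localization: I will find a sub-ball $B_* \subset B(x_0,r)$ of volume comparable to $r^d$ on which the pointwise inequality $|z-u|\le |z-x|$ holds, and then apply the almost-monotonicity hypothesis on $g$ before averaging.

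First I would record the elementary triangle-inequality consequences of the hypotheses $x\in B(x_0,r/2)$ and $z\in B(x_0,2r)^c$. Writing $R:=|z-x_0|$, these give $R\ge 2r$ and $|z-x|\ge R-r/2$. In particular $|z-x|$ is comparable to $R$, and the task reduces to exhibiting a set of $u\in B(x_0,r)$ of measure $\gtrsim r^d$ for which $|z-u|\le R-r/2$; any such $u$ will satisfy $|z-u|\le |z-x|$ automatically.

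Next I would construct such a sub-ball explicitly. Let $\nu := (z-x_0)/R$ be the unit vector from $x_0$ towards $z$, set $p := x_0 + \tfrac{3r}{4}\nu$, and consider the ball $B(p, r/8)$. For every $u \in B(p, r/8)$ the triangle inequality gives
\[
|u-x_0| \le \tfrac{3r}{4} + \tfrac{r}{8} = \tfrac{7r}{8} < r, \qquad |z-u| \le (R-\tfrac{3r}{4}) + \tfrac{r}{8} = R-\tfrac{5r}{8} \le R-\tfrac{r}{2} \le |z-x|,
\]
so $B(p,r/8) \subset B(x_0,r)$ and the desired pointwise inequality on $u$ is verified. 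Note that the shift $3r/4$ and radius $r/8$ are calibrated precisely so that both inclusions hold; this is the only step where the hypothesis $R\ge 2r$ (equivalently $z\notin B(x_0,2r)$) is used.

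Finally I would apply the hypothesis $g(|z-x|)\le c\,g(|z-u|)$ pointwise for $u\in B(p,r/8)$ and integrate. Since $|B(p,r/8)| = \omega_d(r/8)^d$ and the integrand is nonnegative, extending the domain of integration to $B(x_0,r)$ yields
\[
g(|z-x|) \le \frac{c}{|B(p,r/8)|}\int_{B(p,r/8)} g(|z-u|)\,du \le c'\,r^{-d}\int_{B(x_0,r)} g(|z-u|)\,du,
\]
with $c' = c\cdot 8^{d}/\omega_d$, which is the claim. There is no genuine obstacle here beyond choosing the geometric parameters; the argument is essentially a Harnack-type thickening of the segment from $x_0$ towards $z$.
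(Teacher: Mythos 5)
Your argument is correct and complete. The paper itself does not spell out a proof for this lemma, deferring instead to \cite[Lemma 2.7]{Mi}; the argument there is of the same geometric flavor (locate a sub-ball of measure comparable to $r^d$ inside $B(x_0,r)$ on which $|z-u|\le|z-x|$ holds pointwise, then invoke the almost-monotonicity of $g$ and integrate). Your explicit choice $p=x_0+\tfrac{3r}{4}\nu$, radius $r/8$, is a clean instantiation of that idea, and the two inequalities $|u-x_0|<7r/8<r$ and $|z-u|<R-5r/8<R-r/2\le|z-x|$ (with $R=|z-x_0|\ge 2r$) verify both the inclusion $B(p,r/8)\subset B(x_0,r)$ and the pointwise comparison; the positivity $|z-u|\ge r>0$ needed to apply the hypothesis also holds. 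The resulting constant $c'=c\,8^d/\omega_d$ depends only on $c$ and $d$, as required.
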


\begin{Prop}\label{prop:rhi} There is a constant $\lambda_0\in (0,\frac{1}{16})$
so that for every $\lambda\in (0,\lambda_0]$ 
	there exists a constant $C_5=C_5(\lambda)\geq 1$ such that for all
$x_0\in\R^d$, $r\in (0,\frac{1}{2})$ and  $x,y\in B(x_0,\lambda r)$ 
	\[
		\E^x[H(X_{\tau_{B(x_0,\lambda r)}})]\leq C_5
\E^y[H(X_{\tau_{B(x_0,r)}})],
	\]
	for every non-negative function $H\colon \R^d\rightarrow [0,\infty)$
supported in $B(x_0,3r/2)^c$.
\end{Prop}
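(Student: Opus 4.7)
The plan is to bound both sides by the same quantity
\[
\mathcal{I} := r^{-d}\int_{B(x_0, 3r/4)} \int H(z) j(|z-u|)\,dz\,du,
\]
combining the L\'evy system (Proposition \ref{prop:comp}), the exit time bounds (Propositions \ref{prop:est1} and \ref{prop:est2}), the Krylov--Safonov estimate (Proposition \ref{prop:ks}), and the averaging Lemma \ref{lem:ujs}. Take $\lambda_0 \in (0, \tfrac{1}{16}) \cap (0, \tfrac{\sin\vartheta}{8}]$ so that Proposition \ref{prop:ks} applies with outer ball $B(x_0, r)$.

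For the upper bound, Proposition \ref{prop:comp} yields $\E^x[H(X_{\tau_{B(x_0, \lambda r)}})] = \E^x \int_0^{\tau_{B(x_0, \lambda r)}} \Phi(X_s)\,ds$ with $\Phi(v) := \int H(z) n(v, z-v)\,dz$. By (\ref{eq:main_assum}), $\Phi(v) \leq \|k_2\|_\infty \int H(z) j(|z-v|)\,dz$. Since $j$ is almost decreasing on $(0,\infty)$ (combining Remark \ref{rem:decr} with (J2)), Lemma \ref{lem:ujs} applied with $g = j$ and base radius $3r/4$ (noting $B(x_0, \lambda r) \subset B(x_0, 3r/8)$) gives, for $v \in B(x_0, \lambda r)$ and $z \in B(x_0, 3r/2)^c$,
\[
j(|z-v|) \leq c\, r^{-d} \int_{B(x_0, 3r/4)} j(|z-u|)\,du.
\]
Combining with Proposition \ref{prop:est1} and the slow variation $\ell(\lambda r) \asymp \ell(r)$ (uniformly in $r \in (0, \tfrac{1}{2})$) produces $\E^x[H(X_{\tau_{B(x_0, \lambda r)}})] \leq C_1(\lambda) \tfrac{r^\alpha}{\ell(r)} \mathcal{I}$.

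For the lower bound, Proposition \ref{prop:ks} applied with outer ball $B(x_0, r)$ and $A := \overline{B(x_0, \lambda r/4)}$ yields $\P^y(T_A < \tau_{B(x_0, r)}) \geq C_4(\lambda)(\lambda/4)^d$. By the strong Markov property and $H \geq 0$, it suffices to show $\E^{z_0}[H(X_{\tau_{B(x_0, r)}})] \geq c(\lambda) \tfrac{r^\alpha}{\ell(r)} \mathcal{I}$ uniformly in $z_0 \in A$. Since $H$ vanishes on $B(x_0, r) \supset B(x_0, \lambda r/2)$, any jump out of $B(x_0, \lambda r/2)$ into $\mathrm{supp}(H)$ also exits $B(x_0, r)$ at the same point, so
\[
\E^{z_0}[H(X_{\tau_{B(x_0, r)}})] \geq \E^{z_0}[H(X_{\tau_{B(x_0, \lambda r/2)}})] = \E^{z_0} \int_0^{\tau_{B(x_0, \lambda r/2)}} \Phi(X_s)\,ds.
\]
Proposition \ref{prop:est2} gives $\E^{z_0}\tau_{B(x_0, \lambda r/2)} \gtrsim (\lambda r)^\alpha/\ell(\lambda r)$, and it remains to establish a pointwise lower bound $\Phi(u) \geq c(\lambda)\,\mathcal{I}$ for $u \in B(x_0, \lambda r/2)$, via the lower bound in (\ref{eq:main_assum}), the cone condition (\ref{eq:cone1}), and a two-sided version of Lemma \ref{lem:ujs}.

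The main obstacle is this pointwise lower bound on $\Phi$. Since $k_1$ vanishes off the set $B = \bigcup_i S_i \subset S^{d-1}$, Lemma \ref{lem:ujs} cannot be reversed naively: $n(u, z-u)$ may be zero for many $u$ even when $\int H(z) j(|z-u|) du$ is large. The remedy is to exploit (\ref{eq:cone1}) together with the smallness of $\lambda$ relative to the cap radii $\varepsilon_i$: for $u \in B(x_0, \lambda r/2)$ and $z$ whose direction from $x_0$ lies in a slightly shrunken version of $B$, the angular variation $|(z-u)/|z-u| - (z-x_0)/|z-x_0|| \leq 2\lambda/3$ is harmless, and (\ref{eq:cone1}) still forces $n(u, z-u) \geq \delta j(|z-u|)$. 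Contributions from $z$ whose direction from $x_0$ is at angular distance $> 2\lambda/3$ from $B$ vanish on the left-hand side too, since then $k_2$ vanishes for every $v \in B(x_0, \lambda r)$; hence without loss of generality $\mathcal{I}$ may be replaced by its restriction to the shrunken cone. The lower bound on $\Phi$ then follows from the two-sided comparability $j(|z-u|) \asymp j(|z-x_0|)$ for $u \in B(x_0, 3r/4)$ and $z \in B(x_0, 3r/2)^c$, a direct consequence of (J1) and (J2) applied to the relevant regime $|z-x_0| \geq 3r/2$.
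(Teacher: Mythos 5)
Your overall strategy---reduce via Proposition~\ref{prop:comp} to comparing occupation integrals of $\Phi(v)=\int H(z)n(v,z-v)\,dz$, bound the exit from the small ball above using Proposition~\ref{prop:est1} and Lemma~\ref{lem:ujs}, and bound the exit from the big ball below using Proposition~\ref{prop:ks} and Proposition~\ref{prop:est2}---is reasonable, and your upper bound $\E^x[H(X_{\tau_{B(x_0,\lambda r)}})]\leq C(\lambda)\tfrac{r^\alpha}{\ell(r)}\mathcal{I}$ goes through (this is essentially the paper's estimate (\ref{eq:tmprhi100})). The gap is in the lower bound, and you correctly identify where the difficulty lies, but the remedy you propose does not close it.

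The problem is the directions near $\partial B$, where $B=\bigcup_i S_i$. Denote by $B^-$ the $c\lambda$-shrinking of $B$ (directions at chord distance $>c\lambda$ from $B^c$) and by $B^+$ the $c\lambda$-fattening. Your argument establishes two facts: (i) if $\tfrac{z-x_0}{|z-x_0|}\in B^-$ then $n(u,z-u)\geq\delta j(|z-u|)$ for all $u\in B(x_0,\lambda r/2)$; (ii) if $\tfrac{z-x_0}{|z-x_0|}\notin B^+$ then $n(v,z-v)=0$ for all $v\in B(x_0,\lambda r)$, so the left-hand side receives no contribution from such $z$. You then conclude that ``$\mathcal{I}$ may be replaced by its restriction to the shrunken cone,'' but this does not follow: the annulus $B^+\setminus B^-$ is non-empty (it has angular width comparable to $\lambda$), and for $H$ supported on $z$ with direction in this annulus the left-hand side can be strictly positive while the pointwise lower bound on $\Phi$ fails (there exist $u\in B(x_0,\lambda r/2)$ with $\Phi(u)=0$). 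Concretely, for $n(x,h)=|h|^{-d-\alpha}\mathbbm{1}_{\{|h_1|\geq 0.99|h|\}}$ as in (\ref{eq:n_cone}), take $H=\mathbbm{1}_{B(z_0,\varepsilon)}$ with $\tfrac{z_0-x_0}{|z_0-x_0|}$ lying just inside $\{|\xi_1|=0.99\}$: shifting the base point $u$ by $\lambda r/2$ in a suitable transversal direction pushes $\tfrac{z_0-u}{|z_0-u|}$ out of the cone, so $\inf_{u\in B(x_0,\lambda r/2)}\Phi(u)=0$, and your lower bound produces only the trivial $\E^y[\cdots]\geq 0$. Making $\lambda$ small shrinks the annulus but never eliminates it, and the two-sided comparability $j(|z-u|)\asymp j(|z-x_0|)$ is irrelevant to this, since the obstruction is in the angular factor $k_1$, not in $j$.

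The paper avoids exactly this obstruction by \emph{not} reducing to a single averaged quantity $\mathcal{I}$. Instead, it fixes one $z$ at a time (it suffices to take $H=\mathbbm{1}_A$), distinguishes the case $n(u,z-u)\equiv 0$ on $B(x_0,\lambda r)$ (where the left-hand side vanishes trivially) from the case where some $u_0$ has $n(u_0,z-u_0)>0$, and in the latter case constructs intermediate balls around $\widetilde{x}_0=x_0-\tfrac{r}{2}\xi'$ and $\widetilde{z}_0\in\partial B_{r/2}$ whose choice \emph{depends on} $z$ (through $\xi'$, the cone direction aligned with $z-u_0$). The geometric conditions (\ref{eq:rhi_tmp1}) then guarantee that the process starting at $y$ can jump through these balls along admissible cone directions and reach $z$, giving a lower bound (\ref{eq:tmprhi301}) comparable to the upper bound (\ref{eq:tmprhi100}) \emph{for that particular} $z$. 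Because the path is adapted to $z$, there is no annulus to worry about. To repair your proof you would need to replace the uniform pointwise lower bound on $\Phi$ with a $z$-dependent construction of this sort, at which point you have essentially recovered the paper's argument.
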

\begin{proof}
	Let $x_0\in\R^d$, $r\in (0,\frac{1}{2})$ and let $x,y\in B(x_0,\lambda
r)$, where $\lambda\in (0,\lambda_0)$ and $\lambda_0 \in (0,\frac{1}{16})$ is
chosen later. $\lambda_0$ will depend only on constants in our main assumptions.
 Take $z\in B(x_0,3r/2)^c$. There are only two cases. 

	{\bf Case 1:} There exists $u_0\in B(x_0,\lambda r)$ so that
$n(u_0,z-u_0)>0$.

	{\bf Case 2:} $n(u,z-u)=0$ for all $u\in B(x_0,\lambda r)$.

        We consider Case 1.  By (\ref{eq:main_assum}) and (\ref{eq:k}) there exist $\xi' \in \{\pm \eta_1, \ldots, \pm \eta_N\}$ and $\vartheta' \in (0,\frac{\pi}{2}]$ with 
	\[
		\frac{\langle z-u_0, \xi' \rangle}{|z-u_0|}\geq
\cos\vartheta'.
	\]
	Note that $\xi', \vartheta'$ depend on $u_0, z, x_0$ and $r$ but $\vartheta' \geq \vartheta$ uniformly with $\vartheta$ as in (\ref{eq:cone1}). 

Set $\tilde{x_0}=x_0-\frac{r}{2}\xi'$ and take $\lambda_0\leq \frac{\sin
\vartheta}{16}$. Let $B_s:=B(x_0,s)$ and
$\tilde{B}_s:=B(\tilde{x}_0,s)$. As in (\ref{eq:cone2}), for $\lambda\leq \lambda_0$ we have
\[
 \frac{|\langle u-v,\xi'\rangle|}{|u-v|}\geq \cos{\xi'} \ \text{ for all } \
u\in B_{2\lambda r},\  v\in\tilde{B}_{2\lambda r}\,. 
\]

Choose $\tilde{z_0}\in \partial B_{r/2}$ so that the following conditions hold:
\begin{equation}\label{eq:rhi_tmp1}
\begin{aligned}{2}
 |z-w|&\leq |z-u|\ &&\text{ for all }\ u\in B_{2\lambda r},\ w\in
B(\tilde{z_0},\tfrac{\lambda r}{4})  \,, \\
 \frac{\langle w-v,\xi'\rangle}{|w-v|}&\geq \cos{\vartheta'} \ &&\text{ for all } \
v\in \tilde{B}_{2\lambda r},\  w\in B(\tilde{z_0},\tfrac{\lambda r}{4}) \,, \\ 
\frac{\langle z-w,\xi'\rangle}{|z-w|}&\geq \cos{\vartheta'} \ &&\text{ for all } \ w\in B(\tilde{z_0},\tfrac{\lambda r}{4}) \,. 
\end{aligned}
\end{equation}
In the appendix we briefly explain the geometric argument behind the choice of $\tilde{z_0}\in \partial B_{r/2}$. 

Let $B'_s=B(\tilde{z_0},s)$. By the strong Markov property,
\begin{align}
 \E^y\left[\int_0^{\tau_{B_r}}n(X_s,z-X_s)\,ds\right]&\geq \E^y\left[\int_{\tau_{B_{2\lambda r}}}^{\tau_{B_r}}n(X_s,z-X_s)\,ds; X_{\tau_{B_{2\lambda r}}}\in \tilde{B}_{\lambda r}\right]\nonumber\\
 &=\E^y\left[\left\{\int_0^{\tau_{B_r}}n(X_s,z-X_s)\,ds\right\}\circ \theta_{\tau_{B_{2\lambda r}}};X_{\tau_{B_{2\lambda r}}}\in \tilde{B}_{\lambda r}\right]\nonumber\\
 &=\E^y\left[\E^{X_{\tau_{B_{2\lambda r}}}}\left[\int_0^{\tau_{B_r}}n(X_s,z-X_s)\,ds\right];X_{\tau_{B_{2\lambda r}}}\in \tilde{B}_{\lambda r}\right]\,.\label{eq:rhi_tmp4}
\end{align}

Similarly, for $v\in \tilde{B}_{\lambda r}$ we have
\begin{equation}\label{eq:rhi_tmp5}
	\E^v\left[\int_0^{\tau_{B_r}}n(X_s,z-X_s)\,ds\right]\geq \E^v\left[\E^{X_{\tau_{\tilde{B}_{2\lambda r}}}}\left[\int_0^{\tau_{B_r}}n(X_s,z-X_s)\,ds\right]; X_{\tau_{\tilde{B}_{2\lambda r}}}\in B'_{\frac{\lambda r}{8}}\right]\,.
\end{equation}

Let $w\in B'_{\frac{\lambda r}{8}}$. Then (J1), (J2), Proposition \ref{prop:est2} and (\ref{eq:rhi_tmp1})  yield
\begin{align}
	\E^w &\left[\int_0^{\tau_{B_r}}n(X_s,z-X_s)\,ds\right] \geq \E^w\left[\int_0^{\tau_{B'_{\frac{\lambda r}{4}}}}n(X_s,z-X_s)\,ds\right]\nonumber\\&\geq c_1 \E^w \left[\int_0^{\tau_{B'_{\frac{\lambda r}{4}}}}j(|z-X_s|)\,ds\right] \geq c_2\E^w\tau_{B'_{\frac{\lambda r}{4}}}(4\lambda r)^{-d}\int_{B_{4\lambda r}}j(|z-u|)\,du\nonumber\\
	&\geq c_3 \lambda^{\alpha-d}\frac{r^{\alpha-d}}{\ell(\frac{\lambda r}{4})}\int_{B_{4\lambda r}}j(|z-u|)\,du\,.\label{eq:rhi_tmp6}
\end{align}

Combining (\ref{eq:rhi_tmp4}),  (\ref{eq:rhi_tmp5}) and (\ref{eq:rhi_tmp6}) we obtain
\begin{align*}
\begin{split}
	\E^y&\left[\int_0^{\tau_{B_r}}n(X_s,z-X_s)\,ds\right]\\&\geq c_3\lambda^{\alpha-d}\frac{r^{\alpha-d}}{\ell(\frac{\lambda r}{4})}\int_{B_{4\lambda r}}j(|z-u|)\,du\, \E^y\left[\P^{X_{\tau_{B_{2\lambda r}}}}(X_{\tau_{\tilde{B}_{2\lambda r}}}\in B'_{\frac{\lambda r}{8}}); X_{\tau_{B_{2\lambda r}}}\in \tilde{B}_{\lambda r}\right]\,.
\end{split}
\end{align*}

Similarly as in the proof of Proposition \ref{prop:ks} we obtain, for some $c_4 = c_4(\lambda) > 0$
\[
	\P^v(X_{\tau_{\tilde{B}_{2\lambda r}}}\in B'_{\frac{\lambda r}{8}})\geq c_4 \ \text{ for all } \ v\in \tilde{B}_{\lambda r}
\]
and
\[
	\P^u(X_{\tau_{B_{2\lambda r}}}\in \tilde{B}_{\lambda r})\geq c_4 \ \text{ for all } \ u\in B_{\lambda r}\,.
\]

Therefore,
\begin{equation}\label{eq:tmprhi301}
	\E^y\left[\int_0^{\tau_{B_r}}n(X_s,z-X_s)\,ds\right]\geq c_5 \frac{r^{\alpha-d}}{\ell(\frac{\lambda r}{4})}\int_{B_{4\lambda r}}j(|z-u|)\,du\,.
\end{equation}

On the other hand, by Proposition \ref{prop:est1} and Lemma \ref{lem:ujs},
\begin{align}
	\E^x\left[\int_0^{\tau_{B_{\lambda r}}}n(X_s,z-X_s)\,ds\right]&\leq c_6\E^x\left[\int_0^{\tau_{B_{\lambda r}}}j(|z-X_s|)\,ds\right]\nonumber\\
		&\leq c_7 \E^x\tau_{B_{\lambda r}}(4 r)^{-d}\int_{B_{4\lambda r}}j(|z-u|)\,du\nonumber\\
		&\leq c_8 \frac{r^{\alpha-d}}{\ell(2\lambda r)}\int_{B_{4\lambda r}}j(|z-u|)\,du.\label{eq:tmprhi100}
\end{align}
It follows from  (\ref{eq:tmprhi301}) and (\ref{eq:tmprhi100}) that
\begin{equation}\label{eq:tmpwhi113}
	\E^x\left[\int_0^{\tau_{B_{\lambda r}}}n(X_s,z-X_s)\,ds\right]\leq c_9 \E^y\left[\int_0^{\tau_{B_r}}n(X_s,z-X_s)\,ds\right].
\end{equation}

Next, we consider Case 2, i.e. 	$n(u,z-u)=0$ for all $u\in B(x_0,\lambda r)$. Also in this case, assertion (\ref{eq:tmpwhi113}) holds true, because
\begin{align}\label{eq:tmprhi1}
 \begin{split}
\E^y\left[\int_0^{\tau_{B_r}}n(X_s,z-X_s)\,ds\right] &\geq 0 \,, \\
\E^x\left[\int_0^{\tau_{B_{\lambda r}}}n(X_s,z-X_s)\,ds\right] &= 0 \,.  
 \end{split}
\end{align}
We have shown that (\ref{eq:tmpwhi113}) always holds. It is enough to prove the proposition for $H=\mathbbm{1}_A$, where $A\subset B(x_0,3r/2)^c$. We conclude from  Proposition \ref{prop:comp} and (\ref{eq:tmpwhi113}) that
	\begin{align*}
		\P^y(X_{\tau_{B_r}}\in A)&=\int_{A}\E^y\left[\int_0^{\tau_{B_r}}n(X_s,z-X_s)\,ds\right]\,dz\nonumber\\
		&\geq c_9^{-1} \int_A\E^x\left[\int_0^{\tau_{B_{\lambda r}}}n(X_s,z-X_s)\,ds\right]\,dz	\\
		&=c_9^{-1} \P^x(X_{\tau_{B_{\lambda r}}}\in A).
	\end{align*}
\end{proof}

\pagebreak[4]
\section{Harnack inequality}

In this section we prove Theorem \ref{thm:harnack}.

\begin{proof}[Proof of Theorem \ref{thm:harnack}] Since $f$ is non-negative in $B(x_0,4r)$, we may assume that $\ds{\inf_{x\in
B(x_0,r)}f(x)}$ is positive. If not, we would prove the claim for  $f_\varepsilon
=f+\varepsilon$ and then consider $\varepsilon\to 0+$. By taking a constant
multiple of $f$ we may further assume $\ds{\inf_{x\in
B(x_0,r)}f(x)=\tfrac{1}{2}}$. 

Choose $u\in B(x_0,r)$ such that $f(u)\leq 1$. By Proposition
\ref{prop:est1} and using properties of slowly varying functions we can find a constant $c_1>0$ such that for all $u,v\in\R^d$ and $s\in (0,r]$
\begin{equation}\label{eq:tmp10}
	\E^u\tau_{B(v,2s)}\leq c_1\frac{s^\alpha}{\ell(s)}\ \text{ and }
\ \E^u\tau_{B(v,s)}\leq c_1\frac{r^\alpha}{\ell(r)} .
\end{equation}
From Proposition \ref{prop:ks} we deduce that there is a constant $c_2>0$ and $\lambda\in (0,\frac{\sin\vartheta}{16}]$ such that for all $A\subset B(x_0,2\lambda r)$ and $y\in B(x_0,2\lambda r)$
\begin{equation}\label{eq:ks}
\P^y(T_A<\tau_{B(x_0,2r)})\geq c_2\frac{|A|}{|B(x_0,2r)|}.
\end{equation}
Similarly, by  Proposition \ref{prop:ks} we see that there exists a constant
$c_3\in (0,1)$ such that for every $x\in \R^d$, $s<r$ and $C\subset B(x,\lambda
s)$ with $|C|/|B(x,\lambda s)|\geq \frac{1}{3}$
\[
	\P^x(T_C<\tau_{B(x,s)})\geq c_3.
\]
The idea of the proof is to show that $f$ is bounded from the above in
$B(x_0,r)$ by
\[
	c_4\left(1+\frac{r^\alpha}{\ell(r)}\sup_{v\in B(x_0,2r)}\int_{B(x_0,4r)^c}f^{-}(z)n(v,z-v)\,dz\right),
\]
for some constant $c_4>0$ that does not depend on $f$.
This will be proved by contradiction. 

Define
\begin{equation}\label{eq:eta}
	\eta=\frac{c_3}{3}\ \ \textrm{ and } \  \ \zeta=\frac{\eta}{2C_5},
\end{equation}
where $C_5$ is taken from Proposition \ref{prop:rhi}.

Assume that there exists $x\in B(x_0,\tfrac{3r}{2})$ such that $f(x)=K$ for
some 
\[
K>\max\left\{\frac{K_0}{\zeta},\frac{2\cdot 8^d\lambda^{-d}K_0}{c_2\zeta}\right\},
\]
where
\begin{equation}\label{eq:k0}
K_0=1+c_1\frac{r^\alpha}{\ell(r)}\sup_{v\in B(x_0,2r)}\int_{B(x_0,4r)^c}f^{-}(z)n(v,z-v)\,dz.
\end{equation}
Let $s=\left(\tfrac{2K_0}{c_2\zeta K}\right)^{1/d}2\lambda^{-1}\,r$. Then
$s<\frac{r}{4}$ and 
\[
|B(x,\lambda s)|=\frac{2K_0}{c_2\zeta\,K}|B(x_0,2r)| \,.
\]

Set $B_s:=B(x,s)$ and $\tau_s:=\tau_{B(x,s)}$. Let $A$ be a compact subset of
\[
A'=\{w\in B(x,\lambda s)\colon f(w)\geq \zeta K\}.
\]
By the optional stopping theorem, (\ref{eq:tmp10}), (\ref{eq:ks}) and  
Proposition \ref{prop:comp} 
\begin{align*}
1&\geq f(u)=\E^{u}[f(X_{T_A\wedge \tau_{B(x_0,2r)}})]\\
&\geq \E^{u}[f(X_{T_A\wedge \tau_{B(x_0,2r)}}); T_A<
\tau_{B(x_0,2r)}]-\E^{u}[f^-(X_{T_A\wedge \tau_{B(x_0,2r)}}); T_A>
\tau_{B(x_0,2r)}]\\
&\geq \zeta K\,\P^{u}(T_A< \tau_{B(x_0,2r)})-\E^{u}[f^-(X_{\tau_{B(x_0,2r)}})]\\
&=\zeta K\,\P^{u}(T_A<
\tau_{B(x_0,2r)})-\E^{u}\left[\int_0^{\tau_{B(x_0,2r)}}\int_{B(x_0,4r)^c}
f^-(z)n(X_t,z-X_t)\,dz\,dt\right]\\
&\geq c_2\,\zeta K\frac{|A|}{|B(x_0,2r)|}-c_1\tfrac{r^\alpha}{\ell(r)}\sup_{v\in
B(x_0,2r)}\,\int_{B(x_0,4r)^c}f^-(z)n(v,z-v)\,dz.
\end{align*}
Using (\ref{eq:k0}) we obtain
\begin{align*}
&\ \ \ \ \  \frac{|A|}{|B(x,\lambda s)|}\leq\\&\leq
\left(1+c_1\frac{r^\alpha}{\ell(r)}\sup_{v\in
B(x_0,2r)}\int_{B(x_0,4r)^c}f^-(z)n(v,z-v)\,dz\right)
\frac{|B(x_0,2r)|}{c_2\zeta K |B(x,\lambda s)|}\\&=\frac{K_0}{c_2\zeta
K}\frac{|B(x_0,2r)|}{|B(x,\lambda s)|}=\tfrac{1}{2},
\end{align*}
which implies
\[
	\frac{|A'|}{|B(x,\lambda s)|}\leq \frac{1}{2}.
\]
Let $C\subset B(x,\lambda s)\setminus A'$ be a compact subset such that
\begin{equation}\label{eq:defc}
	\frac{|C|}{|B(x,\lambda s)|}\geq \frac{1}{3}.
\end{equation}

Let $H=f^+\,\mathbbm{1}_{B_{3s/2}^c}$. Assume that
\begin{equation}\label{eq:tmpas}
\E^x[H(X_{\tau_{\lambda s}})]>\eta K.
\end{equation}
Then for any $y\in B(x,\lambda s)$ we have
\begin{align*}
f(y)&=\E^yf(X_{\tau_s})=\E^yf^+(X_{\tau_s})-\E^yf^-(X_{\tau_s})\nonumber\\
&=\E^yf^+(X_{\tau_s})-\E^y[f^-(X_{\tau_s});X_{\tau_s}\not\in B(x_0,4r)]\nonumber\\
&\geq \E^y[f^+(X_{\tau_s});X_{\tau_s}\not\in
B_{3s/2}]-\E^y[f^-(X_{\tau_s});X_{\tau_s}\not\in B(x_0,4r)].\label{eq:tmp1}
\end{align*}
Applying Proposition \ref{prop:rhi} to $H$ it follows
\begin{align*}
f(y)\geq C_5^{-1}&\E^x[f^+(X_{\tau_{\lambda s}});X_{\tau_{\lambda s}}\not\in B_{3s/2}]\\&-c_1\frac{r^\alpha}{\ell(r)}\sup_{v\in B(x_0,2r)}\int_{B(x_0,4r)^c}f^-(z)n(v,z-v)\,dz.
\end{align*}
Combining the last display with the assumption (\ref{eq:tmpas}) and the definition of $\zeta $ in (\ref{eq:eta}) gives
\[
	f(y)\geq C_5^{-1}\eta K-K_0=\zeta K\left(2-\tfrac{K_0}{\zeta
K}\right)\geq \zeta K \ \textrm{ for all }\ y\in B(x,\lambda s),
\]
which is a contradiction to (\ref{eq:defc}). Therefore
$\E^x[H(X_{\tau_{\lambda s}})]\geq \eta K$ . 

Let $M=\sup_{v\in B_{3s/2}}f(v)$. Then
\begin{align*}
	K=f(x)&=\E^x[f(X_{T_C});T_C<\tau_s]+\E^x[f(X_{\tau_s});\tau_s<T_C,X_{\tau_s}\in B_{3s/2}]\\
	&\ \ +\E^x[f(X_{\tau_s});\tau_s<T_C,X_{\tau_s}\not\in B_{3s/2}]\\
	&\leq \zeta K\,\P^x(T_C<\tau_s)+M(1-\P^x(T_C<\tau_s))+\eta K
\end{align*}
and thus
\[
	\frac{M}{K}\geq \frac{1-\eta -\zeta\,\P^x(T_C<\tau_s) }{1-\P^x(T_C<\tau_s)}.
\]
From the last display we conclude that $M\geq
K(1+2\beta)$ with 
$\beta=\tfrac{c_3}{6(1-c_3)}+\tfrac{\zeta}{2}>0$. Thus there
exists $x'\in B(x,\frac{3s}{2})$ so that $f(x')\geq K(1+\beta)$.

Using this procedure we obtain sequences $(x_n)$ and
$(s_n)$ such that $x_{n+1}\in
B(x_n,\frac{3s_n}{2})$ and $K_n:=f(x_n)\geq (1+\beta)^{n-1}K$. Thus
\[
	\sum_{n=1}^\infty |x_{n+1}-x_i|\leq \tfrac{3}{2}\sum_{n=1}^\infty
s_i\leq c_5\left(\tfrac{K_0}{K}\right)^{1/d}r,
\] 
for some constant $c_5>0$.

If $K>K_0\,c_5^d$, then $(x_n)$ is a sequence in $B(x_0,\frac{3r}{2})$  such
that 
\[
\lim_{n\to+\infty} f(x_n)\geq \lim_{n\to+\infty} (1+\beta)^{n-1}K_1=\infty.
\]
  This is a contradiction with the boundedness of  $f$ and so $K\leq c_5^dK_0$.
Thus
\begin{align*}
	\sup_{v\in B(x_0,r)}f(v)&\leq c_5^d\,K_0\\&=
c_5^d\left(1+\frac{r^\alpha}{\ell(r)}\sup_{v\in
B(x_0,2r)}\int_{B(x_0,4r)^c}f^-(z)n(v,z-v)\,dz\right) .
\end{align*}

Now, let $x,y\in B(x_0,r)$. Then
\begin{align*}
f(x)&\leq c_5^d\left(1+\frac{r^\alpha}{\ell(r)}\sup_{v\in
B(x_0,2r)}\int_{B(x_0,4r)^c}f^-(z)n(v,z-v)\,dz\right)\\&\leq
2c_5^df(y)+c_5^d\frac{r^\alpha}{\ell(r)}\sup_{v\in
B(x_0,2r)}\int_{B(x_0,4r)^c}f^-(z)n(v,z-v)\,dz.
\end{align*}
The proof is complete. 
\end{proof}

\section{Regularity estimates}

In this section we prove a general tool that
allows to deduce regularity estimates from the version of the Harnack equality
given in Theorem \ref{thm:harnack}. This approach is developed
in \cite{Kas11b}, see also Theorem 3 in \cite{KaDy11}. 

\begin{Thm}\label{thm:app_moritz} Let
$m\colon \R^d\times
\left(\R^d\setminus \{0\}\right)\rightarrow [0,\infty)$ be a measurable
function such that \mbox{$\sup\limits_{x\in\R^d} \int_{\R^d}
(|h|^2 \wedge 1) m(x,h) \, dh$} is finite. Assume there is a
function $\gamma:(0,\infty) \to (0,\infty)$ such that for all $x,h \in \R^d, h
\ne 0$ 
\begin{align}\label{eq:assum_badman}
 k\left(\tfrac{h}{|h|}\right) \gamma(|h|) \leq m(x,h) \leq \gamma(|h|)\,, 
\end{align}
where $k:S^{d-1} \rightarrow [0,\infty)$ is a measurable bounded symmetric function such that there is $\delta >0$ and a non-empty open set $I \subset S^{d-1}$ with $k(\xi) \geq \delta$ for every $\xi \in I$. Furthermore, assume that 
\begin{align}\label{eq:assum_gamma}
\limsup\limits_{R\to \infty} R^{\sigma_1} \int\limits_{B(0,R)^c} \gamma(|u|) \, du \leq 1 \,, \qquad \liminf\limits_{r\to 0+} r^{\sigma_2} \int\limits_{B(0,r)^c} \gamma(|u|) \, du \geq 1 \,,
\end{align}
with $0 < \sigma_1 \leq \sigma_2$. Let $\mathcal{L}$ be a non-local operator
defined by
\begin{align}\label{eq:defLm}
	\mathcal{L}f(x)=\int_{\R^d\setminus\{0\}}(f(x+h)-f(x)-\langle
\nabla f(x), h \rangle \,\mathbbm{1}_{\{|h|\leq 1\}})m(x,h)\,dh
\end{align}
for $f \in C_b^2(\R^d)$. \\
Assume that harmonic functions with respect to $\mathcal{L}$ satisfy a
Harnack inequality, i.e. there exist constants $c_1, c_2 \geq 1$ such that for
every
$x_0\in \R^d$, $r\in
(0,\frac{1}{4})$ and for every bounded function $f\colon \R^d\rightarrow \R$
which is
non-negative in $B(x_0,4r)$  and harmonic in $B(x_0,4r)$ the following
Harnack inequality holds for all $x,y\in B(x_0,r)$
\begin{align}\label{eq:assum_harnack}
f(x)\leq c_1 f(y)+ c_2 M(x_0, r) \sup_{v\in
B(x_0,2r)}\int_{B(x_0,4r)^c}f^-(z) m(v,z-v)\,dz \,, 
\end{align}
where $M(x_0,r)= (\int_{B(x_0,4r)^c} m(x_0, z-x_0) \, dz)^{-1}$. \\
Then there exist $\beta \in (0,1)$, $c \geq
1$
such that for every $x_0
\in \R^d$, every $R  \in (0,1)$, every function
$f:\R^d \to \R$ which is harmonic in $B(x_0,R)$ and every $\rho \in (0,R/2)$
\begin{align}
\sup\limits_{x,y \in B(x_0,\rho)} | f(x) - f(y) | &\leq c \|f\|_\infty
(\rho/R)^{\beta} \,. 
\end{align}
\end{Thm}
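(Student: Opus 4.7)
The plan is to iterate the Harnack inequality \eqref{eq:assum_harnack} on a geometric sequence of shrinking balls and show that the oscillation of $f$ decays geometrically, following the approach of \cite{Kas11b}. Normalising so that $\|f\|_\infty \leq 1$, I pick $\theta = 1/4$ and set $r_k = \theta^k R$, $B_k = B(x_0, r_k)$, $M_k = \sup_{B_k} f$, $m_k = \inf_{B_k} f$, and $\operatorname{osc}_k = M_k - m_k$. The goal is to prove $\operatorname{osc}_k \leq C\eta^k \|f\|_\infty$ for some $\eta \in (0,1)$, which then delivers the H\"older estimate with exponent $\beta = -\log_\theta \eta$ by a standard interpolation between consecutive scales.

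For the iterative step, both $f - m_k$ and $M_k - f$ are non-negative and harmonic in $B_k = B(x_0, 4 r_{k+1})$. Applying \eqref{eq:assum_harnack} at scale $r_{k+1}$ to each, taking suprema and infima over $B_{k+1}$, and adding the two resulting inequalities yields
\[
(1 + c_1)\operatorname{osc}_{k+1} \leq (c_1 - 1)\operatorname{osc}_k + 2c_2\, M(x_0, r_{k+1})\, \mathcal{T}_k,
\]
where $\mathcal{T}_k := \sup_{v \in B(x_0, 2 r_{k+1})} \int_{B_k^c} \max\{(f - m_k)^-, (M_k - f)^-\}(z)\, m(v, z-v)\, dz$. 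With $\mu := (c_1 - 1)/(c_1 + 1) \in (0,1)$, this becomes $\operatorname{osc}_{k+1} \leq \mu\, \operatorname{osc}_k + C\, M(x_0, r_{k+1})\, \mathcal{T}_k$. Decomposing $B_k^c = \bigcup_{j=0}^{k-1}(B_j \setminus B_{j+1}) \cup B_0^c$ and noticing that on $B_j \setminus B_{j+1}$ the integrand is dominated by $\operatorname{osc}_j$ (since $f \in [m_j, M_j]$ there) while on $B_0^c$ it is bounded by $2\|f\|_\infty$, I would use the upper bound $m \leq \gamma(|\cdot|)$ to control the annular integrals, and the lower bound $m \geq \delta\, \mathbbm{1}_I(\cdot/|\cdot|)\, \gamma(|\cdot|)$ together with \eqref{eq:assum_gamma} to bound $M(x_0, r_{k+1})$ from above. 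The outcome is a recursion of the form
\[
\operatorname{osc}_{k+1} \leq \mu\, \operatorname{osc}_k + C \sum_{j=0}^{k-1} \theta^{\varepsilon(k-j)} \operatorname{osc}_j + C \theta^{\tau k} \|f\|_\infty
\]
for suitable $\varepsilon, \tau > 0$ determined by $\sigma_1, \sigma_2$.

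The main obstacle lies in closing this recursion to force genuinely geometric decay of $\operatorname{osc}_k$. This is handled by a convolution-type iteration lemma: if $\theta$ is chosen small enough (possibly shrinking from $1/4$) so that the cumulative tail is a small perturbation of the linear term $\mu\, \operatorname{osc}_k$, then $\operatorname{osc}_k \leq C \eta^k \|f\|_\infty$ for some $\eta \in (\mu, 1)$. This demands careful bookkeeping of the two competing uses of assumption \eqref{eq:assum_gamma}: the upper estimate controls the numerators of the annular tails, while the lower estimate provides a workable upper bound on $M(x_0, r_{k+1})$. The condition $\sigma_1 \leq \sigma_2$ is precisely what makes the rescaled coefficients $\theta^{\varepsilon(k-j)}$ actually geometric in $k-j$ rather than merely bounded. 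Once the geometric decay of $\operatorname{osc}_k$ is established, the sought H\"older bound on any ball $B(x_0, \rho)$ follows by comparing $\rho$ with the nearest $r_k$.
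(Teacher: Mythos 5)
Your strategy -- iterate the Harnack inequality \eqref{eq:assum_harnack} on a geometric chain of balls, decompose the tail integral over dyadic annuli, and close a convolution-type recursion for the oscillation -- is essentially the same argument the paper uses, except that the paper packages the iteration into a single citation: after rewriting the tail term in terms of the normalized measures $\nu_r^x$ (inequality \eqref{eq:hoelder-harnack-ass_corr}) and verifying the geometric tail decay \eqref{eq:assum_nu_r}, the paper invokes Lemma~11 of \cite{KaDy11} and stops. You are, in effect, unpacking that lemma inline. The arithmetic you carry out to pass from the two applications of \eqref{eq:assum_harnack} (to $f-m_k$ and $M_k-f$) to the recursion $\operatorname{osc}_{k+1}\le\mu\operatorname{osc}_k+\dots$ with $\mu=(c_1-1)/(c_1+1)$ is standard and correct, and your remark that $\sigma_1\le\sigma_2$ is what makes the annular weights $r_{k+1}^{\sigma_2}r_{j+1}^{-\sigma_1}\le\theta^{\sigma_1(k-j)}$ genuinely geometric rather than merely bounded hits the right point. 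So in spirit the two proofs coincide; yours buys transparency at the cost of re-proving the iteration lemma.

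That said, the sketch leaves the technical heart unfinished, and the gap is worth naming. When you bound the annular weights you want $M(x_0,r_{k+1})\cdot\sup_v\int_{B_{j+1}^c}\gamma(|z-v|)\,dz\le C\,\theta^{\varepsilon(k-j)}$ with $C$ uniform in $R$. The denominator $M(x_0,r_{k+1})^{-1}\ge c\,r_{k+1}^{-\sigma_2}$ is supplied by the lower bound on $k$, the cone $I$, and the $\liminf$ half of \eqref{eq:assum_gamma}. But the $\limsup$ half of \eqref{eq:assum_gamma} controls $\int_{B(0,R')^c}\gamma$ only for \emph{large} $R'$, whereas the radii $r_{j+1}<R<1$ appearing in your annular decomposition are all small; an upper bound of the form $\int_{B(0,r)^c}\gamma\le C\,r^{-\sigma_1}$ for small $r$ is \emph{not} one of the hypotheses, and without it the chain $r_{k+1}^{\sigma_2}r_{j+1}^{-\sigma_1}$ is not justified. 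The paper's reduction sidesteps this: in \eqref{eq:est_U} the bound is derived for radii $>R_0>1$, and the verification \eqref{eq:assum_nu_r} is only a $\limsup$ in $j$ at each fixed scale; reconciling that with the actual range of scales used in the iteration is exactly what Lemma~11 of \cite{KaDy11} is responsible for, and it is not automatic from the naive bookkeeping. If you keep the self-contained route, you must either (a) compare the numerator and denominator tails of $\gamma$ directly (they are integrals of the \emph{same} function at two scales, not two independent quantities to be bounded by different power laws), which requires making explicit some reverse-doubling-type control, or (b) formulate and prove the iteration lemma you are implicitly using, with the hypothesis written precisely as $\sup_{r}\limsup_j(\eta_{r,j})^{1/j}<1$ plus $\sup_{r,j}\eta_{r,j}<\infty$, and track how the constant in the conclusion depends on these. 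As stated, ``a convolution-type iteration lemma handles this'' is the statement of the hard part, not a proof of it.

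Two smaller remarks. First, once you allow $\theta$ to shrink from $1/4$, you must check that applying \eqref{eq:assum_harnack} at scale $r_{k+1}$ with $4r_{k+1}<r_k$ is still compatible with your decomposition; it is, since $(f-m_k)^-$ vanishes on $B_k\setminus B(x_0,4r_{k+1})$, but this deserves a sentence. Second, in your recursion the $B_0^c$ contribution should not be treated as an independent $\theta^{\tau k}\|f\|_\infty$ term pulled out of the air; rather, absorb $\operatorname{osc}_0\le 2\|f\|_\infty$ into the $j=0$ annulus so that the whole tail is a single weighted sum over $j$, as this is where the normalization by $M(x_0,r_{k+1})$ actually does its work.
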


{\bf Remark:} Conditions (\ref{eq:assum_badman}), (\ref{eq:assum_gamma}), (\ref{eq:defLm}) do not imply in general that $\mathcal{L}$ satisfies a
Harnack inequality, see the discussion of Example 2.

Let us illustrate this result by giving two examples.

{\bf Example 3:} $m(x,h)= |h|^{-d-\alpha}$, i.e. $k\equiv 1$, $\gamma(t) =
t^{-d-\alpha}$, $\sigma_1=\sigma_2=\alpha$. Then $\mathcal{L} = c(\alpha)
\Delta^{\alpha/2}$. The Harnack inequality (\ref{eq:assum_harnack}) then becomes
\begin{align}\label{eq:assum_harnack_fracalpha}
f(x)\leq c_1 f(y)+ c_2 r^\alpha \int_{B(x_0,4r)^c}f^-(z)
|z-x_0|^{-d-\alpha} \,dz \,, 
\end{align}
and the theorem can be applied. Note that the function $f$ in
(\ref{eq:assum_harnack_fracalpha}) might be negative outside of $B(x_0,4r)$.

{\bf Example 4:} $m(x,h) \asymp |h|^{-d-\alpha}$, i.e. $k\equiv 1$,
$\gamma(t) = t^{-d-\alpha}$, $\sigma_1=\sigma_2=\alpha$, cf. \cite{BL1}. The
Harnack inequality can be formulated as in (\ref{eq:assum_harnack_fracalpha}).

\begin{proof}[Proof of Theorem \ref{thm:reg_est}] We apply Theorem
\ref{thm:app_moritz}. Let $k=k_1$ as in (\ref{eq:main_assum}) and $I=B_1$ as in (\ref{eq:k}). Set $m(x,h)=n(x,h)$, $\gamma(t)=j(t)$, $\sigma_1 = \sigma$ and $\sigma_2 = \alpha-\varepsilon$ where $\varepsilon \in
(0,\alpha-\sigma)$ is arbitrary. Then the first condition in
(\ref{eq:assum_gamma}) follows from (J3). The second condition follows from 
\[ r^{\sigma_2} \int_r^\infty
s^{d-1} j(s) \, ds = r^{\alpha-\varepsilon} \int_r^\infty
s^{-1-\alpha} \ell(s) \, ds \sim (1/\alpha) r^{-\varepsilon} \ell(r) \to +\infty
\text{ for } r \to 0+ \,, \]
where we use Proposition \ref{prop:help} (ii). It remains to check that there
is a constant $c>0$ such that for every $x_0 \in \R^d$ and every $r
\in (0,\frac 14)$ 
\begin{align*}
\frac{r^\alpha}{\ell(r)} \leq c M(x_0, r),  \quad \text{ i.e. } \; 
\int_{B(x_0,4r)^c} m(x_0, z-x_0) \, dz \leq c
\frac{\ell(r)}{r^\alpha} \,. 
\end{align*}
This condition follows from 
\begin{align}\label{eq:heart}
\int_{B(x_0,4r)^c} m(x_0, z-x_0) \, dz \leq \int_{B(x_0,4r)^c} j(|z-x_0|) \, dz
\leq c_3 \frac{\ell(4r)}{(4r)^\alpha} \leq c_4 \frac{\ell(r)}{r^\alpha} \,, 
\end{align} 
where we use Proposition \ref{prop:help} (ii) again.
\end{proof}

\begin{proof}[Proof of Theorem \ref{thm:app_moritz}] For $x_0\in \R^d$ and $r\in
(0,1)$ let $\nu_r^x$ denote the  measure on $B(x_0,r)^c$ defined by
\begin{align*}
 \nu_r^x(A) = \Big(\int\limits_A \gamma (|z-x|) \, dz \Big)
\Big(\int\limits_{B(x_0,r)^c} \gamma (z-x_0) \, dz
\Big)^{-1} 
\end{align*}
for every Borel set $A \subset B(x_0,r)^c$. With some positive
constant $c_5 \geq 1$ depending on $k$ we obtain for every bounded function
$f\colon \R^d\rightarrow \R$  
\begin{align*}
& M(x_0,r) \sup\limits_{x \in B(x_0,r/2)} 
\int\limits_{B(x_0,r)^c} f^-(z) m(x,z-x) \, dz \\
 &\leq c_5 \Big( \int\limits_{B(x_0,r)^c} \gamma (|y-x_0|) \, dy \Big)^{-1}
\sup\limits_{x \in
B(x_0,r/2)} \int\limits_{B(x_0,r)^c} f^-(z) \gamma(|z-x|) \, dz \,.
\end{align*}
This observation together with the
main assumption of the theorem ensures that there exist constants $c_1, c_2
\geq 1$ such that for
every such $x_0\in \R^d$, $r\in (0,1)$ and every bounded function $f\colon
\R^d\rightarrow \R$ which is
non-negative in $B(x_0,r)$  and harmonic in $B(x_0,r)$ the following
estimate holds 
\begin{align}\label{eq:hoelder-harnack-ass_corr}
\sup\limits_{B(x_0,r/4)} f  \leq c_1
\inf\limits_{B(x_0,r/4)} f + c_2 \sup\limits_{x \in
B(x_0,r/2)} \int\limits_{B(x_0,r)^c} f^-(z) \nu^x_r(dz) 
\,.
\end{align} 
We aim to apply Lemma 11 from
\cite{KaDy11}. Note
that it is not important for the
application of \cite[Lemma 11]{KaDy11} whether harmonicity is defined
with respect to an operator $\mathcal{L}$ or some Dirichlet form. Assumption
(\ref{eq:assum_gamma})
implies that there are $c_6 \geq 1$ and $R_0>1$ such that for every $R > R_0$,
$r\in
(0,1)$ and $x \in B(x_0,r/2)$  
\begin{align}\label{eq:est_U}
 \int\limits_{B(x_0,R)^c} \gamma(|z-x|) \, dz \leq c_6 R^{-\sigma_1} 
\end{align}
Moreover, there is $c_7 \geq 1$ with  
\begin{align}\label{eq:est_Ukomp}
\Big(\int\limits_{B(x_0,r)^c} \gamma(|z-x_0|) \, dz\Big)^{-1} \leq c_7 r^{\sigma_2}
\,.
\end{align}
Estimates (\ref{eq:est_U}) and (\ref{eq:est_Ukomp}) imply:
\begin{align*}
& \exists c_8 \geq 1 \; \forall r \in (0,1) \; \exists j_0 \geq 1 \; \forall
j
\geq j_0 \; \forall x \in B(x_0,\tfrac r2): \\
& \qquad 
\nu^x_r\big(B(x_0,2^{j} r)^c\big) \leq c_8
(2^{j}r)^{-\sigma_1} r^{\sigma_2} \leq c_8 2^{-\sigma j} \,.
\end{align*}
Recall that we assumed $\sigma_1 \leq \sigma_2$. Note that
$2^{-\sigma} < 1$ and
$c_8^{1/j} \to
1$ for $j \to \infty$. We finally proved
\begin{align}\label{eq:assum_nu_r}
\sup\limits_{0<r<1} \limsup\limits_{j \to \infty} (\eta_{r,j})^{1/j} < 1, \quad \text{ where }
\eta_{r,j} :=
\sup\limits_{x \in B(x_0,r/2)} \nu^x_r(B(x_0,2^{j}
r)^c ) < \infty \,.
\end{align} Lemma 11 from
\cite{KaDy11} can be applied. The proof is complete.
\end{proof}

\section*{Appendix}

We explain the geometric arguments behind the proof of Proposition \ref{prop:rhi}

Given $\eta \in S^{d-1}$ and $\rho >0$ we define a cone $V(\eta,\rho) \subset \R^d$ as follows. Set
\begin{align*}
 S(\eta, \rho) &= \big( B(\eta, \rho) \cup B(-\eta, \rho) \big) \cap S^{d-1} \text{ and } V(\eta, \rho) = \{ x \in \R^d | x \ne 0, \tfrac{x}{|x|} \in S(\eta, \rho) \} \,.
\end{align*}

From now on, we keep $\eta \in S^{d-1}$ and $\rho >0$ fixed and write $V$ instead of $V(\eta, \rho)$. Choose $\vartheta \in (0, \frac{\pi}{2}]$ so that $\rho^2 = 2(1-\cos\vartheta)$.

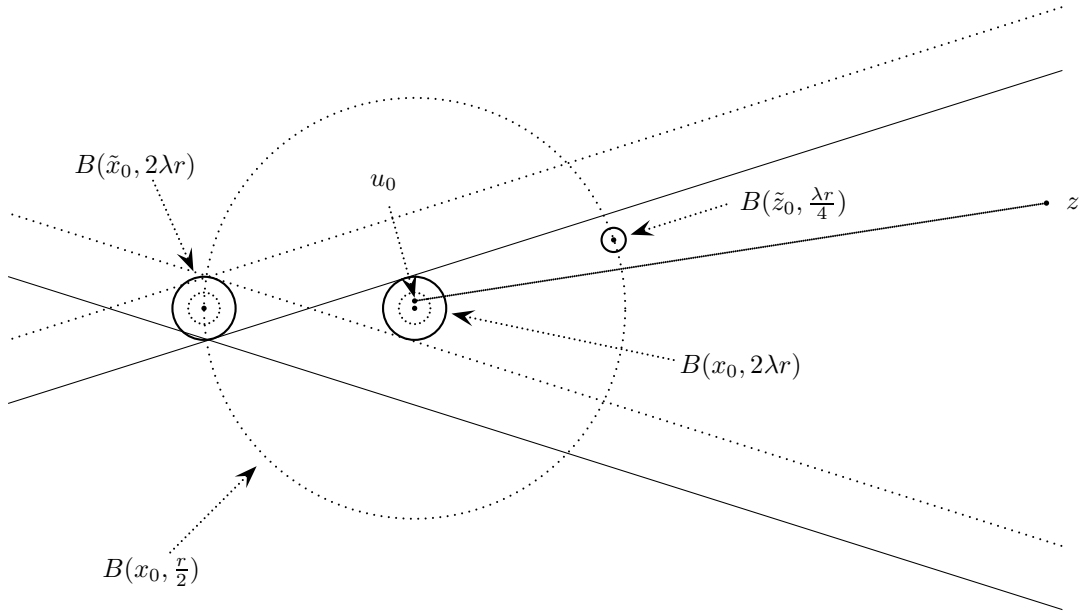
\begin{figure}[ht!]
 \centering 
\psset{unit=0.70cm}
\fontsize{10}{6}\selectfont
\begin{pspicture*}(-4,-6)(20,6)
\psset{linewidth=\pslinewidth}
\psset{dotsize=2pt 0}
\pstGeonode[PointName=none](0, 0){C1}(4,0){C2}
\pstCircleOA[Radius=\pstDistVal{0.6}]{C1}{}
\pstCircleOA[Radius=\pstDistVal{0.6}]{C2}{}
\pstCircleOA[Radius=\pstDistVal{0.3},linestyle=dotted,
dotsep=0.05]{C1}{}
\pstCircleOA[Radius=\pstDistVal{0.3},linestyle=dotted,
dotsep=0.05]{C2}{}
\pstMiddleAB[PointSymbol=none, PointName=none]{C1}{C2}{T}
\pstCircleOA[Radius=\pstDistVal{4},linecolor=black,linestyle=dotted,
dotsep=0.1]{C2}{}
\pstInterCC[RadiusA=\pstDistVal{0.61},
             CodeFigB=false, PosAngleB=45,PointName=none,PointSymbol=none]
{C1}{}{T}{}{A}{B}
\pstInterCC[RadiusA=\pstDistVal{0.61},
             CodeFigB=false, PosAngleB=45,PointName=none,PointSymbol=none]
{C2}{}{T}{}{C}{D}
\pstLineAB[linecolor=black,linestyle=dotted,
dotsep=0.1, nodesepA=17,nodesepB=8]{A}{C}
\pstLineAB[linecolor=black,nodesepA=17,nodesepB=8,,linewidth=0.02]{B}{D}
\pstTranslation[PointName=none,PointSymbol=none]{T}{B}{A}
\pstLineAB[linecolor=black,linewidth=0.02,nodesepA=19,nodesepB=4]{A'}{B}
\pstTranslation[PointName=none,PointSymbol=none]{T}{A}{B}
\pstLineAB[linecolor=black,linestyle=dotted,
dotsep=0.1, nodesepA=19,nodesepB=4]{B'}{A}
\pstGeonode(16,2){z}
\pstGeonode[PointName=none](4,0.14){u_0}
\pstLineAB[linecolor=black,linestyle=dotted,
dotsep=0.00]{u_0}{z}
\pstGeonode[PointName=none](7.78,1.3){zz}
\pstCircleOA[Radius=\pstDistVal{0.23}]{zz}{}
\pstGeonode[PointName=none,PointSymbol=none](8.1,1.4){t1}(10,2){t2}
\ncline[linecolor=black,linestyle=dotted,
dotsep=0.06, arrowscale=2]{->}{t2}{t1}
\rput(11.2,2){$B(\tilde{z}_0,\frac{\lambda r}{4})$}

\pstGeonode[PointName=none,PointSymbol=none](9,-1){t3}(4.7,-0.1){t4}
\ncline[linecolor=black,linestyle=dotted,
dotsep=0.06, arrowscale=2]{->}{t3}{t4}
\rput(10.2,-1.1){$B(x_0,2\lambda r)$}
\pstGeonode[PointName=none,PointSymbol=none](-1,2.5){t5}(-0.2,0.7){t6}
\ncline[linecolor=black,linestyle=dotted,
dotsep=0.06, arrowscale=2]{->}{t5}{t6}
\rput(-1.3,2.7){$B(\tilde{x}_0,2\lambda r)$}
\pstGeonode[PointName=none,PointSymbol=none](-0.7,-4.7){t7}(1,-3){t8}
\ncline[linecolor=black,linestyle=dotted,
dotsep=0.06, arrowscale=2]{->}{t7}{t8}
\rput(-1,-5){$B(x_0,\frac{r}{2})$}
\pstGeonode[PointName=none,PointSymbol=none](3.5,2){t9}(4,0.2){t10}
\ncline[linecolor=black,linestyle=dotted,
dotsep=0.06, arrowscale=2]{->}{t9}{t10}
\rput(3.41,2.4){$u_0$}
\end{pspicture*}
\caption{\label{fig:two} The choice of $\tilde{x}_0$ and $\tilde{z}_0$.}
\end{figure}

Using a simple geometric argument one can establish the following fact: 

Let $\lambda \in (0, \frac{\sin \vartheta}{8})$, $x_0 \in \R^d$, $r \in (0,2)$,
$u_0 \in B_{\lambda r}(x_0)$ and $z \in B(x_0,\frac{3r}{2})^c$. Assume $z \in
u_0 + V$. Set $\widetilde{x_0}=x_0 - \frac r2 \xi \in \partial B(x_0, \frac r2)$
where $\xi \in \{+\eta, - \eta\}$ is chosen so that $\langle z-u_0, \xi \rangle
> 0$, see Figure \ref{fig:two}. Then the choice of $\lambda$ implies

\begin{itemize}
\item[(1)] $ B(\tilde{x}_0, 2\lambda r) \subset \bigcap\limits_{u \in B(x_0, 2\lambda r)} (u + V)$ \,.
\end{itemize}

Moreover, there is $\tilde{z}_0 \in \partial B(x_0, \frac r2)$ such that 
\begin{itemize}
\item[(2)] $B(\tilde{z}_0, \frac{\lambda r}{4}) \subset \bigcap\limits_{v \in B(\tilde{x}_0, 2\lambda r)} (v + V)$ \,, 
\item[(3)] $z \in \bigcap\limits_{w \in B(\tilde{z}_0, \frac{\lambda r}{4})} (w + V)$ \,, 
\item[(4)] $|z-\tilde{z}_0| < |z-x_0|$ \\ and thus $|z-w| < |z-u|$ for all $u \in B(x_0, 4 \lambda r), w \in B(\tilde{z}_0, \frac{\lambda r}{4})$ \,.
\end{itemize}
These conditions assure that the Markov jump process under consideration has a strictly positive probability to jump from a neighborhood of $x_0$ via neighborhoods of $\tilde{x}_0$ and $\tilde{z}_0$ to $z$. One could avoid the introduction of $\tilde{z}_0$ and let the process jump directly from the neighborhood of $\tilde{x}_0$ to $z$ but this would result in a slightly stronger assumption than (J2).

\def\cprime{$'$}


\end{document}